\newtheorem{theorem}{Theorem}
\newtheorem{lemma}[theorem]{Lemma}
\newtheorem{proposition}[theorem]{Proposition}
\newcommand{\G}[2]{G_{#1,#2}}
\newcommand{\Exp}{\,\mathbb{E}}
\renewcommand{\Pr}{\,\mathbb{P}}
\newcommand{\eps}{\varepsilon}
\begin{document}
\title{A precise threshold for quasi-Ramsey numbers}
\author{
Ross J. Kang
\thanks{
Radboud University Nijmegen, Netherlands.
Email: {\tt ross.kang@gmail.com}.
This author was supported by a NWO Veni grant. This work was begun while this author was at Durham University, supported by EPSRC grant EP/G066604/1.
}
\and
J\'anos Pach
\thanks{
EPFL, Lausanne and R\'enyi Institute, Budapest. Supported
by Hungarian Science Foundation EuroGIGA Grant OTKA NN 102029, by Swiss National Science Foundation Grants 200020-144531 and 200021-137574.
Email: {\tt pach@cims.nyu.edu}.
}
\and
Viresh Patel
\thanks{
University of Amsterdam, Netherlands.
Email: {\tt V.S.Patel@uva.nl}.
This author was supported by the Queen Mary -- Warwick Strategic Alliance.
This work was begun while this author was at Durham University, supported by EPSRC grant EP/F064551/1.
}
\and
Guus Regts
\thanks{
University of Amsterdam, Netherlands.
Email: {\tt guusregts@gmail.com}.
The research leading to these results has received funding from the European Research Council
under the European Union's Seventh Framework Programme (FP7/2007-2013) / ERC grant agreement
n$\mbox{}^{\circ}$ 339109.
}
}
\date{\today}
\maketitle

\begin{abstract}
We consider a variation of Ramsey numbers introduced by Erd\H{o}s and Pach~\cite{ErPa83}, where instead of seeking complete or independent sets we only seek a {\em $t$-homogeneous set}, a vertex subset that induces a subgraph of minimum degree at least $t$ or the complement of such a graph.

For any $\nu > 0$ and positive integer $k$, we show that any graph $G$ or its complement contains as an induced subgraph some graph $H$ on $\ell \ge k$ vertices with minimum degree at least $\frac12(\ell-1) + \nu$ provided that $G$ has at least $k^{\Omega(\nu^2)}$ vertices.
We also show this to be best possible in a sense.
This may be viewed as correction to a result claimed in~\cite{ErPa83}.

For the above result, we permit $H$ to have order at least $k$.  In the harder problem where we insist that $H$ have exactly $k$ vertices, we do not obtain sharp results, although we show a way to translate results of one form of the problem to the other.
\end{abstract}



\section{Introduction}
\label{sec:intro}

Recall that the (diagonal, two-colour) Ramsey number is defined to be the smallest integer $R(k)$ for which any graph on $R(k)$ vertices is guaranteed to contain a homogeneous set of order $k$ --- that is, a set of $k$ vertices corresponding to either a complete or independent subgraph.
The development of asymptotic bounds for these numbers is an important and challenging area of mathematics with a history of more than eighty years.   
Since the work of Erd\H{o}s and Szekeres~\cite{ErSz35} and Erd\H{o}s~\cite{Erd47}, there has been no progress in improving bounds on the first-order term of $\ln R(k)$, so even seemingly small improvements in asymptotic bounds on $R(k)$ are of major importance~\cite{Con09}.

We consider a degree-based generalisation of $R(k)$ where, rather than seeking a clique or coclique of order at least $k$, we seek instead an induced subgraph of order at least $k$ with high minimum degree (clique-like graphs) or low maximum degree (coclique-like graphs). We call this the \emph{variable quasi-Ramsey} problem. By gradually relaxing the
degree requirement, we get a spectrum of Ramsey-type problems where we see a sharp change at a certain point. 
 Erd\H{o}s and Pach~\cite{ErPa83} introduced such problems and obtained some interesting results summarised below. 

\subsection{The variable quasi-Ramsey problem}

For a graph $G=(V,E)$, 
we write $\overline{G}$ for the complement of $G$. As a starting point, Erd\H{o}s and Pach observed the following.

\begin{proposition} [\cite{ErPa83}]
\label{EPbasic}
\mbox{}
\begin{enumerate}
\item\label{EPbasic,a} For $0 \le \alpha < \frac{1}{2}$, there exists a constant $C(\alpha)$ such that, for each $k \in \mathbb{N}$ and any graph $G$ with at least $C(\alpha) k$ vertices, $G$ or $\overline{G}$ has an induced subgraph $H$ on $\ell \geq k$ vertices with minimum degree at least $\alpha \ell$.
\item\label{EPbasic,b} For $\frac{1}{2} < \alpha \le 1$, there exists a constant $C(\alpha) > 1$ such that, for each $k \in \mathbb{N}$, there is a graph $G$ with at least $C(\alpha)^k$ vertices satisfying the following. If $H$ is any induced subgraph of $G$ or $\overline{G}$ on $\ell \ge k$ vertices, then $H$ has minimum degree less than $\alpha \ell$.
\end{enumerate}
\end{proposition}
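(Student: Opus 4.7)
For part (a), the plan is to run two independent peeling procedures and close them up with a simple edge count. Starting from $V_0 = V(G)$, I would repeatedly delete a vertex whose $G$-degree in the current vertex set is strictly less than $\alpha$ times the current order; let $C$ denote the resulting ``$\alpha$-core'' for $G$. By construction $\delta(G[C]) \ge \alpha|C|$, so if $|C| \ge k$ we take $H := G[C]$. Running the same peeling in $\overline{G}$ yields $C'$ with $\delta(\overline{G}[C']) \ge \alpha|C'|$. If both $|C|,|C'| < k$, we derive a contradiction as follows: since each deletion destroys fewer than $\alpha$ times the current order edges,
$$ e(G) \;\le\; \binom{|C|}{2} + \alpha\sum_{i=|C|+1}^{n} i \;\le\; \binom{k}{2} + \alpha\binom{n+1}{2}, $$
and symmetrically for $e(\overline{G})$. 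Adding these bounds and using $e(G)+e(\overline{G})=\binom{n}{2}$ yields $\binom{n}{2} \le 2\binom{k}{2} + 2\alpha\binom{n+1}{2}$. Since $\alpha < 1/2$, the coefficient of $n^{2}$ on the left strictly exceeds that on the right, so this inequality fails whenever $n \ge C(\alpha)k$ for an explicit constant $C(\alpha)$ of order $1/\sqrt{1-2\alpha}$.

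For part (b), the plan is to take $G = G(n,1/2)$ with $n$ exponentially large in $k$ and apply a union bound. Fix $S \subseteq V(G)$ with $|S|=\ell \ge k$. The condition $\delta(G[S]) \ge \alpha\ell$ implies $e(G[S]) \ge \alpha\ell^{2}/2$, but $e(G[S]) \sim \mathrm{Bin}(\binom{\ell}{2},1/2)$ has mean approximately $\ell^{2}/4$, so Hoeffding's inequality combined with $\alpha > 1/2$ gives $\Pr[\delta(G[S]) \ge \alpha\ell] \le \exp(-c(\alpha)\ell^{2})$ for some constant $c(\alpha)>0$, and by the symmetry of $G(n,1/2)$ the same bound holds for $\overline{G}[S]$. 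Union bounding over all subsets and all sizes $\ell \ge k$,
$$ \sum_{\ell \ge k} 2\binom{n}{\ell}\exp\bigl(-c(\alpha)\ell^{2}\bigr) \;\le\; 2\sum_{\ell \ge k} \exp\bigl(\ell\ln n - c(\alpha)\ell^{2}\bigr), $$
which is less than $1$ whenever $\ln n \le c(\alpha)k/2$. Choosing $C(\alpha) := e^{c(\alpha)/2}>1$, a graph on $n = C(\alpha)^{k}$ vertices with the desired property exists (adjusting $C(\alpha)$ to absorb small values of $k$).

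The only technical point lies in part (b): verifying that the Chernoff bound produces a positive constant $c(\alpha)$ when $\alpha$ is bounded away from $1/2$, and that the union bound over all $\ell \ge k$ can be summed. Both follow cleanly from standard large-deviation estimates. The peeling and edge-counting argument for part (a) is entirely elementary once one thinks to run the $G$-peeling and the $\overline{G}$-peeling in parallel and combine them through $e(G)+e(\overline{G})=\binom{n}{2}$.
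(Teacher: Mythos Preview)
Your proposal is correct and matches the paper's approach in spirit. The paper does not prove Proposition~\ref{EPbasic} in full (it is attributed to~\cite{ErPa83}), but its implicit arguments are visible elsewhere: for part~(a), the unnamed lemma inside the proof of Theorem~\ref{thm:upper} is exactly an explicit form of the peeling bound (the paper first passes to whichever of $G,\overline{G}$ has at least $\tfrac12\binom{n}{2}$ edges and then peels once, whereas you peel in both and combine via $e(G)+e(\overline{G})=\binom{n}{2}$; these are equivalent and yield the same $1/\sqrt{1-2\alpha}$ dependence). For part~(b), the paper's Proposition~\ref{prop:lower} also uses $G(n,1/2)$ with a union bound, but applies the sharper degree-based large-deviation estimate of Lemma~\ref{lem:A_n} (i.e.\ $\Pr(\Delta(G_{\ell,1/2})\le \bar t)\le \exp(-\binom{\ell}{2}\Lambda^*(\bar t/(\ell-1)))$) rather than your edge-count Hoeffding bound. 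That yields the better base $C(\alpha)=\exp(\tfrac12\Lambda^*(1-\alpha))$, versus your $\exp(\Theta((\alpha-\tfrac12)^2))$; since $\Lambda^*(\tfrac12-\eps)\ge 2\eps^2$ the paper's bound dominates, but for the bare existence of some $C(\alpha)>1$ your simpler route is perfectly adequate.
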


Investigating the abrupt change at $\alpha = \frac{1}{2}$, Erd\H{o}s and Pach~\cite{ErPa83} proved the following much stronger result, using graph discrepancy to prove part~\ref{EPbasic,a} and a weighted random graph construction to prove part~\ref{EPbasic,b}.

\begin{theorem}[\cite{ErPa83}] 
\label{EP-variable}
\mbox{}
\begin{enumerate}
\item\label{EP-variable,a} There exists a constant $C>0$ such that, for each $k \in \mathbb{N}$, $k>1$, and any graph $G$ with at least $Ck\ln k$ vertices, $G$ or $\overline{G}$ has an induced subgraph $H$ on $\ell \geq k$ vertices with minimum degree at least $\frac12\ell$.
\item\label{EP-variable,b} For any $\rho \ge 0$, there is a constant $C_\rho > 0$ such that, for large enough $k$, there is a graph $G$ with at least $C_\rho k\ln k/\ln\ln k$ vertices satisfying the following.
If $H$ is any induced graph of $G$ or $\overline{G}$ on $\ell\ge k$ vertices, then $H$ has minimum degree less than $\frac12\ell-\rho$.
\end{enumerate}
\end{theorem}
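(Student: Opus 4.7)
\emph{Part (a).} My plan is a cleaning argument. Starting with $S = V(G)$, iteratively delete any vertex $v \in S$ with $d_{G[S]}(v) < |S|/2$. The surviving set $S^*$ has every vertex of degree at least $|S^*|/2$ in $G[S^*]$; if $|S^*| \geq k$, take $H = G[S^*]$. Otherwise run the analogous process on $\overline{G}$ to obtain $\overline{S}^*$, and show that at least one of $|S^*|, |\overline{S}^*|$ must be at least $k$ whenever $n \geq Ck\ln k$. Each vertex deleted by the $G$-process has $G[S_i]$-degree $< |S_i|/2$ and hence $\overline{G}[S_i]$-degree $\geq |S_i|/2 - 1$ in the current set $S_i$, so each process records a sequence of ``unbalanced'' vertices with respect to the other orientation. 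The number of such unbalanced vertices in any $S_i$ is controlled by a discrepancy estimate, for instance a bound on $\sum_{v \in S_i}\bigl(d_{G[S_i]}(v) - (|S_i|-1)/2\bigr)^2$ in terms of the edge-count discrepancy $|2e(G[S_i]) - \binom{|S_i|}{2}|$. Summing these per-stage bounds across stages of sizes $n, n-1, \ldots, k$ yields the factor $\ln k$.

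\emph{Part (b).} My plan uses a weighted random graph. Partition $V$ of size $n = C_\rho k\ln k / \ln\ln k$ into blocks $V_1,\ldots,V_t$ of geometrically varying sizes with $t = \Theta(\ln n / \ln\ln n)$, and include each edge independently at random with a probability depending on the blocks of its endpoints. Tune these probabilities so that degrees inside any subset $S$ are spread across scales rather than concentrated near $|S|/2$. A Chernoff-type concentration bound gives an upper estimate on the probability that a fixed $S$ of size $\ell$ has minimum $G[S]$-degree $\geq \ell/2 - \rho$ and simultaneously minimum $\overline{G}[S]$-degree $\geq \ell/2 - \rho$; then a union bound over $S$ and over $\ell \in \{k,\ldots,n\}$ shows that with positive probability no such bad $S$ exists, and one such instance of $G$ is taken for the construction.

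\emph{Main obstacles.} For part (a), the hard step is converting the discrepancy estimate into a per-stage deletion bound tight enough that the total size loss matches $n-k$ when $n = Ck\ln k$: any slack loses an additional logarithmic factor, so the interplay between the $G$- and $\overline{G}$-processes has to be exploited in a coordinated way rather than analysed separately. For part (b), the delicate point is calibrating the block sizes and edge probabilities so that the union bound covers all scales $\ell \in [k,n]$ at once at the improved density $n = O(k\ln k / \ln\ln k)$; a uniform $G(n,1/2)$ construction would lose an additional factor of $\ln\ln n$ because its per-vertex degree distribution concentrates too tightly around $(|S|-1)/2$, so the varying probabilities across blocks are essential.
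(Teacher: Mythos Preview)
Your plan is essentially the paper's (and Erd\H{o}s--Pach's) construction: partition $V$ into $\Theta(\ln k/\ln\ln k)$ blocks, use block-dependent edge probabilities that are perturbations of $\tfrac12$, and finish with Hoeffding/Chernoff plus a union bound over all $S$ with $|S|\ge k$. One detail to correct: in the paper the blocks have \emph{equal} size (about $(1-\tfrac{1}{2z})k$ each), not geometrically varying sizes; it is the edge probabilities $p_{ij}=\tfrac12\pm(2z)^{-\Theta(i+j)}$ that vary across scales. Otherwise your outline matches.

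\textbf{Part (a).} Here there is a real gap. The paper's proof (following Erd\H{o}s--Pach) is \emph{not} a cleaning argument: one takes a set $X$ that \emph{maximises} the discrepancy $D(X)=e(X)-\tfrac12\binom{|X|}{2}$ (or $|D(X)|$), and the maximality immediately forces $\deg_{G[X]}(v)\ge\tfrac12(|X|-1)$ for every $v\in X$, since removing a vertex of smaller degree would increase $D$. The Erd\H{o}s--Spencer lemma then guarantees $|X|$ is large enough. Your iterative deletion approach runs into the well-known obstacle that at the threshold $\alpha=\tfrac12$ the process can strip everything away (this is exactly why the lemma quoted in Section~4 of the paper needs $\alpha<\tfrac12$). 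Your proposed rescue, bounding the number of deletable vertices via a bound on
\[
\sum_{v\in S_i}\Bigl(d_{G[S_i]}(v)-\tfrac{|S_i|-1}{2}\Bigr)^2
\]
in terms of $|2e(G[S_i])-\binom{|S_i|}{2}|$, is not valid: the edge-count discrepancy controls the \emph{sum} of the deviations (it equals $2D(S_i)$), but not the sum of their squares or the number of vertices with negative deviation. A graph can have $D(S)=0$ yet have half its vertices below the degree threshold. Without this control the two cleaning processes on $G$ and $\overline{G}$ cannot be coordinated as you suggest, and the $\ln k$ budget does not emerge. The fix is to abandon cleaning here and instead argue via a discrepancy-maximising set as above.
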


Our first goal is to further investigate the abrupt change described above. We obtain sharp results by the application of a short discrepancy argument and the analysis of a probabilistic construction similar to Proposition~\ref{EPbasic}\ref{EPbasic,b}.

\begin{theorem}
\label{KPPR-variable}
\mbox{}
\begin{enumerate}
\item\label{KPPR-variable,a} Let $\nu\ge0$ and $c>4/3$ be fixed. For large enough $k$ and any graph $G$ with at least $k^{c10^6\nu^2+4/3}$ vertices, $G$ or $\overline{G}$ has an induced subgraph $H$ on $\ell\ge k$ vertices with minimum degree at least $\frac12(\ell-1) +\nu \sqrt{(\ell-1)\ln \ell}$.
\item\label{KPPR-variable,b} There is a constant $C>0$ such that, if $\nu(\cdot)$ is a non-decreasing non-negative function, then for large enough $k$ there is a graph $G$ with at least $Ck^{\nu(k)^2+1}$ vertices such that the following holds. If $H$ is any induced subgraph of $G$ or $\overline{G}$ on $\ell\ge k$ vertices, then $H$ has minimum degree less than $\frac12(\ell-1) +\nu(\ell) \sqrt{(\ell-1)\ln \ell}$.
\end{enumerate}
\end{theorem}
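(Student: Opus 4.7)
The plan is to prove the two parts by contrasting methods: a deterministic peeling/discrepancy argument for (a), and a probabilistic construction for (b).

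For part (a), set $T(\ell) := \tfrac12(\ell-1) + \nu\sqrt{(\ell-1)\ln\ell}$ and argue by contradiction. Suppose no induced subgraph of $G$ or of $\overline{G}$ on $\ell \ge k$ vertices has minimum degree at least $T(\ell)$. Run the greedy peeling separately on $G$ and on $\overline{G}$, at each step deleting a vertex of minimum degree. By hypothesis both peelings reach $k-1$ vertices, and counting deleted edges gives
\[
e(G),\; e(\overline{G}) \le \sum_{\ell=k}^{n} T(\ell) + \binom{k-1}{2} = \tfrac14 n^2 + O\bigl(\nu n^{3/2}\sqrt{\ln n}\bigr) + O(k^2).
\]
These two bounds, summed, fail to contradict $e(G) + e(\overline{G}) = \binom{n}{2}$, so the crude greedy accounting alone is insufficient; additional savings must be extracted from the hypothesis.

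To extract them, the plan is a structural dichotomy on $G$. Either $G$ is far from $\tfrac12$-pseudorandom, in which case one can re-route the peeling to preferentially remove vertices of degree substantially below $T(\ell_j)$, tightening the edge count by a polynomial-in-$k$ factor and giving the needed contradiction; or $G$ is close to $\tfrac12$-regular, in which case a Chernoff bound applied to the induced subgraph on a random subset of size roughly $n^{3/4}$ (the source of the baseline exponent $\tfrac43$) locates a suitable $H$ directly, with the factor $10^6\nu^2$ absorbing the concentration losses. I expect the main obstacle to be calibrating these two horns with compatible parameters: quantifying ``pseudorandom enough'' so that the Chernoff subset has minimum degree exceeding $T(\ell)$ while simultaneously having size at least $k$.

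For part (b), take $G := G(n, \tfrac12)$ on $n := Ck^{\nu(k)^2+1}$ vertices, with $C > 0$ a small absolute constant. For each $\ell$-subset $U$ with $\ell \ge k$, the condition $\delta(G[U]) \ge T(\ell) := \tfrac12(\ell-1) + \nu(\ell)\sqrt{(\ell-1)\ln\ell}$ forces $2e(U) \ge \ell T(\ell)$, so since $e(U) \sim \mathrm{Bin}\bigl(\binom{\ell}{2},\tfrac12\bigr)$, Chernoff gives
\[
\Pr\bigl[\delta(G[U]) \ge T(\ell)\bigr] \le \exp\bigl(-\ell\nu(\ell)^2\ln\ell\bigr) = \ell^{-\ell\nu(\ell)^2}.
\]
Since $\nu$ is non-decreasing, $\ell^{1+\nu(\ell)^2} \ge k^{1+\nu(k)^2}$ for every $\ell \ge k$, hence
\[
\sum_{\ell \ge k}\binom{n}{\ell}\ell^{-\ell\nu(\ell)^2} \le \sum_{\ell \ge k}\Bigl(\frac{en}{\ell^{1+\nu(\ell)^2}}\Bigr)^{\ell} \le \sum_{\ell \ge k} 2^{-\ell} < \tfrac12
\]
provided $C < 1/(2e)$. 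The self-complementary symmetry of $G(n,\tfrac12)$ gives the same bound for $\overline{G}$, so a union bound shows that with positive probability neither $G$ nor $\overline{G}$ contains a forbidden induced subgraph, yielding the desired graph.
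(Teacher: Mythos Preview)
Your part~(b) is essentially correct and matches the paper's approach: both use $G(n,\tfrac12)$ and a union bound over $\ell$-subsets. You bound $\Pr[\delta(G[U])\ge T(\ell)]$ via the weaker implication $e(U)\ge \ell T(\ell)/2$ and a Hoeffding estimate on the edge count, whereas the paper invokes a sharper tail bound on $\Pr[\Delta(G_{\ell,1/2})\le \bar t]$ via the rate function $\Lambda^*$; but the resulting exponent $\ell\,\nu(\ell)^2\ln\ell$ is the same, so your simplification costs nothing here.

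Your part~(a), however, has a genuine gap. As you yourself note, the double greedy peeling only yields $e(G),e(\overline G)\le \tfrac14 n^2 + O(\nu n^{3/2}\sqrt{\ln n})+O(k^2)$, which gives no contradiction. The proposed rescue via a pseudorandomness dichotomy is not a proof but a hope: you do not specify what ``far from $\tfrac12$-pseudorandom'' means, nor how ``re-routing the peeling'' yields a polynomial-in-$k$ saving. More seriously, the pseudorandom horn appears to point the wrong way. If $G$ is close to $\tfrac12$-regular and you pass to a random subset $U$ of size $\ell\approx n^{3/4}$, each degree $\deg_{G[U]}(v)$ is hypergeometric with mean $\tfrac12(\ell-1)$ and fluctuation $\Theta(\sqrt\ell)$; the \emph{minimum} over $\ell$ vertices will then typically sit \emph{below} $\tfrac12(\ell-1)$ by order $\sqrt{\ell\ln\ell}$, not above. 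So Chernoff applied to a random subset does not produce the desired $H$.

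The paper's argument is quite different and much shorter. Define the \emph{skew discrepancy}
\[
D_\nu(X):=\left|\,e(X)-\tfrac12\binom{|X|}{2}\,\right|-\nu\sqrt{|X|^3\ln|X|},
\]
and take $X\subseteq V$ maximising $D_\nu$. A one-line computation shows that if some $x\in X$ had $\deg_{G[X]}(x)<\tfrac12(|X|-1)+\nu\sqrt{|X|\ln|X|}$ (assuming w.l.o.g.\ $D(X)>0$), then $D_\nu(X\setminus\{x\})>D_\nu(X)$, a contradiction; hence $G[X]$ already has the required minimum degree. It remains only to show $|X|\ge k$, and this is where the Erd\H{o}s--Spencer discrepancy lemma enters: any $n$-vertex graph contains a set $S$ with $|S|\le t$ and $|D(S)|\ge 10^{-3}t^{3/2}\sqrt{\ln(5n/t)}$. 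Applying this with $t=k^{4/3}$ and $n\ge k^{c10^6\nu^2+4/3}$ gives a set whose skew discrepancy exceeds $k^2$, forcing the maximiser $X$ to have at least $k$ vertices. The exponent $4/3$ and the constant $10^6$ come directly from this lemma, not from any Chernoff calculation.
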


\noindent

Theorem~\ref{KPPR-variable} exhibits a theshold phenomenon which we elucidate in Section~\ref{sec:thesholds}, where we also make comparisons to Proposition~\ref{EPbasic} and Theorem~\ref{EP-variable}. 
Erd\H{o}s and Pach claimed that their argument for Theorem~\ref{EP-variable}\ref{EP-variable,a} could be extended to prove the statement of Theorem~\ref{KPPR-variable}\ref{KPPR-variable,a} with the term $k^{c10^6\nu^2+4/3}$ replaced by $Ck \ln k$ and $\frac12(\ell-1) +\nu \sqrt{(\ell-1)\ln \ell}$ replaced by $\frac12\ell + \nu\sqrt{\ell}(\ln\ell)^{3/2}$.
Their claimed result contradicts Theorem~\ref{KPPR-variable}\ref{KPPR-variable,b} for $\nu(\ell) = \nu\ln\ell$.

Slightly before the abrupt change occurs, we have found the construction for Theorem~\ref{EP-variable}\ref{EP-variable,b} remains valid, and this yields the following. This improvement is mainly technical in nature, but we have included it for completeness.

\begin{theorem}
\label{thm:weighted}
For any $\nu > 0$, there exists $C_\nu>0$ such that, for large enough $k$, there is a graph $G$ with at least $C_\nu k\ln k/\ln\ln k$ vertices satisfying the following. If $H$ is any induced subgraph of $G$ or $\overline{G}$ on $\ell\ge k$ vertices, then $H$ has minimum degree less than $(\frac12-\ell^{-\nu})(\ell-1)$.
\end{theorem}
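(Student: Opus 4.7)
The plan is to follow the weighted random graph construction used by Erd\H{o}s and Pach~\cite{ErPa83} for Theorem~\ref{EP-variable}\ref{EP-variable,b} and to observe that, with a slightly more careful accounting, essentially the same construction (with one parameter tuned to $\nu$) produces a graph for which every $\ell$-vertex induced subgraph of $G$ or $\overline{G}$ has minimum degree strictly less than $(\tfrac12-\ell^{-\nu})(\ell-1)$; only the analysis needs refinement.

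\textbf{Construction.} Take $n=C_\nu k\ln k/\ln\ln k$ and $r=\Theta(\ln k/\ln\ln k)$, and let $\mathcal{D}$ be a symmetric probability distribution supported on an $r$-point set $P\subseteq(0,1)$ centred at $1/2$, whose two extreme atoms lie at $\tfrac12\pm\delta$ with $\delta\asymp k^{-\nu}$ (and intermediate atoms spread between). Independently assign each vertex $v\in[n]$ a weight $p_v\sim\mathcal{D}$. Given the weights, include each pair $\{u,v\}$ as an edge of $G$ independently with probability $f(p_u,p_v)$, for some symmetric function $f$ with $f(p,p)=p$ and $f(p,p')\le\max\{p,p'\}$; a convenient choice is $f(p,p')=\min\{p,p'\}$.

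\textbf{Core argument.} Fix $\ell\ge k$ and a subset $S\subseteq[n]$ with $|S|=\ell$. With probability at least $1-2(1-q_\ell)^\ell$, where $q_\ell=\Pr(p_v\le\tfrac12-\ell^{-\nu})$, the subset $S$ contains both a vertex $v^-$ of weight $\le\tfrac12-\ell^{-\nu}$ and a vertex $v^+$ of weight $\ge\tfrac12+\ell^{-\nu}$. Conditional on this event, $\Exp[\deg_{G[S]}(v^-)\mid\text{weights}]\le(\tfrac12-\ell^{-\nu})(\ell-1)$, and by a Chernoff bound the actual degree exceeds its mean by $\Omega(\ell^{1-\nu})$ only with probability $\exp(-\Omega(\ell^{1-2\nu}))$. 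The analogous statement for $v^+$ in $\overline{G}[S]$ follows from the symmetry of $\mathcal{D}$ about $1/2$.

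\textbf{Union bound and main obstacle.} A union bound over all $\binom{n}{\ell}\le(en/\ell)^\ell$ subsets of size $\ell$, and then over $\ell\ge k$, closes the argument provided $\mathcal{D}$ is tuned so that simultaneously $q_\ell\gtrsim\ell^{-1}\ln(en/\ell)$ for every $\ell\ge k$ and the extreme atoms sit at $\tfrac12\pm\ell^{-\nu}$. The tail condition amounts to a mass of order $\ln\ln k/\ln k$ at the extreme atoms, which is exactly what $r=\Theta(\ln k/\ln\ln k)$ levels afford and is also the accounting that caps $n$ at $k\ln k/\ln\ln k$ in the original Erd\H{o}s--Pach argument. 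The main technical point is therefore to check that pushing the extreme atoms inward from a constant distance (in the original proof) to $\ell^{-\nu}$ does not break either the union bound or the Chernoff step; this becomes delicate when $\nu$ approaches $1/2$, where $\ell^{1-\nu}$ is no longer comfortably above the concentration scale $\sqrt{\ell\ln(en/\ell)}$ and one must either reshape $\mathcal{D}$ to reduce the degree variance or separately invoke Theorem~\ref{KPPR-variable}\ref{KPPR-variable,b} in that regime.
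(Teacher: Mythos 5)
Your proposal follows the same general direction as the paper---a weighted random graph with $\Theta(\ln k/\ln\ln k)$ weight levels, a Chernoff argument for degree concentration, and a union bound over subsets---but the crucial structural choice is different, and the difference sinks the argument. In the paper the weight levels are assigned \emph{deterministically}: the vertex set is partitioned into $z$ parts $V_1,\dots,V_z$, each of size $\lfloor(1-\tfrac1{2z})k\rfloor<k$, with edge probabilities $p_{ij}$ depending only on the part indices. Because each part has size less than $k$, any $X$ of size $\ell\ge k$ necessarily meets several parts, and one can deterministically locate an index $j'$ with $|X\cap V_{j'}|\ge\ell/(4z^2)$ and $\sum_{i<j'}|X\cap V_i|\ge\ell/(4z)$ (and, for the complement, an index $j^*$ maximizing $|X\cap V_{j^*}|(2z)^{-4j^*}$). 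Since these indices depend only on $X$ and the fixed partition, the only randomness left to union-bound over is the edge randomness, which is exactly what the concentration estimates can absorb.

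In your version the weights $p_v$ are assigned independently at random, and the event ``$S$ contains both a low-weight and a high-weight vertex'' becomes a random event that you fold into the union bound. This cannot work. For a fixed $S$ of size $\ell$, the probability that $S$ contains no low-weight vertex is $(1-q_\ell)^\ell$, and for the union over the $\binom{n}{\ell}\ge(n/\ell)^\ell$ subsets to close one needs $(1-q_\ell)^\ell<(\ell/n)^\ell$, i.e.\ $q_\ell>1-\ell/n$. With $n$ of order $k\ln k/\ln\ln k$ and $\ell$ near $k$, this forces $q_\ell>1-O(\ln\ln k/\ln k)$; by the symmetry you impose on $\mathcal{D}$, the same must hold for the high-weight tail, and the two tails then carry total mass exceeding $1$, which is impossible. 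Your stated sufficient condition $q_\ell\ge\ell^{-1}\ln(en/\ell)$ only yields $(1-q_\ell)^\ell\le\ell/(en)$, a merely polynomially small quantity, while the union bound demands something like $(\ell/n)^\ell$; the gap is a factor on the order of $(n/\ell)^{\ell-1}$. This failure is not specific to $\nu$ near $1/2$ (which the paper sidesteps anyway, by noting the statement for smaller $\nu$ implies it for larger $\nu$ and so assuming $\nu<2/7$); the union bound over random weight assignments breaks for every $\nu>0$. The repair is precisely to make the weight assignment deterministic, as the paper does, so that the only randomness to be union-bounded is the edge randomness.
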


\subsection{The fixed quasi-Ramsey problem}

So far, we have discussed the \emph{variable quasi-Ramsey} problem where we seek to guarantee the existence of a clique-like or coclique-like induced subgraph of order at least $k$. It is also natural to ask for such an induced subgraph of order exactly $k$, and we call this the \emph{fixed quasi-Ramsey} problem. In Section~\ref{sec:thinning}, we provide a probabilistic thinning lemma (Lemma~\ref{lem:thinning}) that allows us to translate results about the variable problem into results about the fixed problem. The lemma roughly says that, in any graph of large minimum degree, we can find an induced subgraph of any order that (approximately) preserves the minimum degree condition in an appropriate way. We can use this thinning lemma to establish bounds similar to Proposition~\ref{EPbasic}\ref{EPbasic,a}. We can also use it, together with Theorem~\ref{EP-variable}\ref{EP-variable,a}, to prove the following result.

\begin{theorem}
\label{KPPR-fixed}
There exists a constant $C>0$ such that, for large enough $k$ and any graph $G$ with at least $Ck\ln k$ vertices, $G$ or $\overline{G}$ has an induced subgraph $H$
  on exactly $k$ vertices with minimum degree at least $\frac12(k-1)-2\sqrt{(k-1)\ln k}$.
\end{theorem}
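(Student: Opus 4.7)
The natural plan is to combine the two ingredients advertised immediately before the statement: apply Theorem~\ref{EP-variable}\ref{EP-variable,a} to obtain an induced subgraph $H$ of $G$ or $\overline{G}$ on $\ell \ge k$ vertices with minimum degree at least $\tfrac12\ell$, and then apply the promised probabilistic thinning lemma (Lemma~\ref{lem:thinning}) to cut $H$ down to exactly $k$ vertices while losing only $O(\sqrt{(k-1)\ln k})$ in the minimum-degree lower bound. Choosing $C$ to be the constant from Theorem~\ref{EP-variable}\ref{EP-variable,a} then gives the desired vertex count $Ck\ln k$ at the outset.

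For the thinning step, I would work with the natural probabilistic construction: let $S$ be a uniformly random $k$-subset of $V(H)$ and consider the induced subgraph $H[S]$. For any fixed vertex $v \in V(H)$, conditional on the event $v \in S$, the degree of $v$ in $H[S]$ is hypergeometrically distributed: we are sampling $k-1$ of the remaining $\ell-1$ vertices, of which $d_H(v) \ge \tfrac12\ell$ are neighbours of $v$. Hence the conditional mean is at least $(k-1)d_H(v)/(\ell-1) \ge (k-1)/2$. A Hoeffding-type lower tail bound for the hypergeometric distribution gives, for $t = 2\sqrt{(k-1)\ln k}$,
\[
\Pr\!\left[d_{H[S]}(v) < \tfrac12(k-1) - t \;\middle|\; v \in S\right] \le \exp\!\bigl(-2t^2/(k-1)\bigr) = k^{-8}.
\]
Multiplying by $\Pr[v\in S] = k/\ell$ and then union bounding over all $\ell$ vertices yields a failure probability of at most $k^{-7}$, so for $k$ large there exists some $S$ for which every vertex of $H[S]$ has the required minimum degree.

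The only real care needed is to keep the constants in the concentration bound honest so that the error term comes out as exactly $2\sqrt{(k-1)\ln k}$; either the classical Hoeffding inequality for sampling without replacement, or the Chernoff bound applied in its hypergeometric form via Hoeffding's reduction to the binomial case, suffices. There is no serious obstacle beyond this bookkeeping: the whole argument is a one-shot random-subset sampling on top of the already-established variable quasi-Ramsey theorem, and the $\sqrt{(k-1)\ln k}$ loss is exactly the typical deviation one expects from taking a random subsample of size $k$ from a denser graph on $\ell \ge k$ vertices.
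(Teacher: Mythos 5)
Your proposal is correct and follows essentially the same route as the paper: apply Theorem~\ref{EP-variable}\ref{EP-variable,a} to get $H$ on $\ell\ge k$ vertices with $\delta(H)\ge\tfrac12\ell$, then thin down to a $k$-set using a hypergeometric tail bound plus a union bound. The paper packages the thinning step as a standalone lemma (Lemma~\ref{lem:thinning}), proved by counting bad pairs $(v,T)$ rather than by your direct random $k$-subset sampling, and uses the JLR form $\exp\bigl(-d^2N/(2ab)\bigr)$ of the hypergeometric Chernoff bound instead of the sharper $\exp\bigl(-2t^2/n\bigr)$ constant you invoke, but the underlying calculation is the same and your choice $t=2\sqrt{(k-1)\ln k}$ lands on the stated bound directly.
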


The bound $Ck \ln k$ in Theorem~\ref{KPPR-fixed} is tight up to a $\ln\ln k$ factor by Theorem~\ref{thm:weighted}.
A similar but different result was proved with discrepancy arguments.

\begin{theorem}[\cite{ErPa83}]
\label{EP-fixed}
There exists a constant $C>1$ such that for every $k, \nu \in \mathbb{N}$ and any graph $G$ with at least $C^\nu k^2$ vertices, $G$ or $\overline{G}$ has an induced subgraph $H$ on exactly $k$ vertices with minimum degree at least $\frac12k + \nu$.
\end{theorem}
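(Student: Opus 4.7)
The plan is to prove Theorem~\ref{EP-fixed} by induction on $\nu$.

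For the base case (small $\nu$), I would use a discrepancy and averaging argument on $G$ (with $n \geq Ck^2$) to extract an induced $k$-subgraph in $G$ or $\overline{G}$ with minimum degree at least $\lceil k/2 \rceil$. After assuming, WLOG, that $e(G) \geq \tfrac12\binom{n}{2}$, the average edge density of a random $k$-subset is at least $1/2$; the quadratic factor $n \geq Ck^2$ provides enough concentration to ensure the existence of a specific $k$-subset in which every vertex has degree at least $\lceil k/2 \rceil$, via a direct discrepancy argument \`a la Erd\H{o}s--Pach.

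For the inductive step ($\nu \geq 1$), given $G$ on $n \geq C^\nu k^2$ vertices, possibly after swapping $G$ and $\overline{G}$ there exists a vertex $u$ with $d_G(u) \geq (n-1)/2$. Setting $N = N_G(u)$, we have $|N| \geq (n-1)/2 \geq C^{\nu-1}(k-1)^2$ for $C$ large enough (say $C \geq 4$). Applying the inductive hypothesis to $G[N]$ with parameters $(k-1, \nu-1)$ yields an induced $(k-1)$-vertex subgraph $H'$ in either $G[N]$ or $\overline{G}[N]$ with $\delta(H') \geq \tfrac12(k-1) + (\nu-1)$. In the \emph{concordant} case where $H' \subseteq G[N]$, adjoining $u$ produces an induced $k$-subgraph $S$ of $G$ in which $u$ has degree $k-1$ (since $V(H') \subseteq N_G(u)$) and every other vertex $v$ has $d_{G[S]}(v) = d_{H'}(v) + 1 \geq \tfrac12(k-1) + \nu$; after integer-rounding and a parity-compatible strengthening of the inductive hypothesis (e.g.\ phrasing the conclusion as $\lceil \tfrac12 k \rceil + \nu$ for odd $k$), this meets the target $\tfrac12 k + \nu$.

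The main obstacle is the \emph{discordant} case, where $H'$ lies in $\overline{G}[N]$ rather than $G[N]$: then $u$ is $\overline{G}$-nonadjacent to every vertex of $V(H')$, so simply adjoining $u$ fails to produce a $\overline{G}$-subgraph of high $\overline{G}$-minimum degree. I would resolve this either by running the inductive step in parallel for $G$ and $\overline{G}$ with two extremal choices of $u$, arguing that at least one of the two calls is concordant, or by strengthening the inductive hypothesis with an auxiliary edge-density condition (such as $e(G) \geq \tfrac12\binom{n}{2}$) that forces concordance automatically. The constant $C > 1$ is then determined by the multiplicative loss incurred by whichever workaround is adopted; either way, $C = O(1)$ suffices.
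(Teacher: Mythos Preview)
The paper does not actually supply its own proof of this theorem: it is quoted from~\cite{ErPa83}, with only the remark that it ``was proved with discrepancy arguments.'' So there is no detailed argument in the paper to compare against; the only information is that the intended route is a direct discrepancy computation (in the spirit of the paper's proof of Theorem~\ref{KPPR-variable}\ref{KPPR-variable,a}), not an induction on $\nu$.

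That said, your inductive scheme has a real gap, precisely at the point you flag. Neither of your two proposed fixes for the discordant case closes it. For the ``parallel'' fix, pick $u$ of maximum $G$-degree and $w$ of maximum $\overline{G}$-degree, and apply the inductive hypothesis inside $N_G(u)$ and inside $N_{\overline{G}}(w)$. Nothing prevents the first call from returning an $H'$ that is good in $\overline{G}$ while the second returns an $H''$ that is good in $G$; both are discordant simultaneously, and you are stuck. For the ``density-strengthened'' fix, you would need the hypothesis $e(G[N_G(u)]) \ge \tfrac12\binom{|N_G(u)|}{2}$ to persist when you pass to a neighbourhood, but a high-degree vertex can easily have a sparse neighbourhood (think of a large complete bipartite graph), so the strengthened hypothesis is not self-propagating. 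Without an additional structural idea you have not supplied, the induction does not go through.

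There is also a secondary arithmetic issue you partially acknowledge: from $\delta(H') \ge \tfrac12(k-1)+(\nu-1)$, adjoining $u$ only yields $\delta(G[S]) \ge \tfrac12(k-1)+\nu = \tfrac12 k + \nu - \tfrac12$, which after integer rounding falls one short of $\lceil \tfrac12 k\rceil + \nu$ when $k$ is odd. A ``parity-compatible strengthening'' would itself have to be proved at the base case and preserved through the step, and you have not shown how. The cleaner path---and the one the paper attributes to Erd\H{o}s and Pach---is to forgo the induction on $\nu$ entirely and run a single discrepancy argument on $k$-sets directly.
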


\subsection{Thresholds and bound comparisons}
\label{sec:thesholds}
We introduce some terminology and notation to facilitate easy comparison of the above results and to exhibit threshold phenomena. A {\em $t$-homogeneous set} is a vertex subset of a graph that induces either a graph of minimum degree at least $t$ or the complement of such a graph.
Let $f : \mathbb{Z}^+ \mapsto \mathbb{N}$ be a non-decreasing non-negative integer function satisfying $f(\ell)<\ell$ for all $\ell$. For any positive integer $k$ the {\em variable quasi-Ramsey number} $R_f(k)$ is defined to be the smallest integer such that any graph of order $R_{f}(k)$ contains an $f(\ell)$-homogeneous set of order $\ell$ for some $\ell \ge k$.
For integers $t$ and $k$ with $0\le t < k$, the {\em fixed quasi-Ramsey number} $R^*_{t}(k)$ is defined to be the smallest integer such that any graph of order $R^*_{t}(k)$ contains a $t$-homogeneous set of order $k$. We refer to both $R_{f}(k)$ and $R^*_t(k)$ as {\em quasi-Ramsey numbers}. Versions of these parameters were introduced in~\cite{ErPa83}. 

Note that Proposition~\ref{EPbasic} shows that, for any fixed $\eps > 0$, as $f$ changes from a function satisfying $f(\ell) \leq (\frac{1}{2} - \eps) \ell$ for all $\ell$ to a function satisfying $f(\ell) \geq (\frac{1}{2} + \eps) \ell$ for all $\ell$, $R_{f}(k)$ changes from  polynomial (indeed, linear) growth in $k$ to superpolynomial (indeed, exponential) growth in $k$. Theorem~\ref{EP-variable}\ref{EP-variable,a} narrows this gap by showing that we can replace $(\frac{1}{2} - \eps)\ell$ above with $\frac{1}{2}\ell$ to achieve polynomial growth in $k$. Theorem~\ref{KPPR-variable} shows that as $f$ changes from a function satisfying $f(\ell) \leq \frac12\ell+ o(\sqrt{\ell\ln \ell})$ for all $\ell$ to a function satisfying $f(\ell) \geq \frac12\ell+ \omega(\sqrt{\ell\ln \ell})$ for all $\ell$, $R_{f}(k)$ changes from  polynomial growth in $k$ to superpolynomial growth in $k$.

The fixed quasi-Ramsey numbers are less well understood. Theorem~\ref{KPPR-fixed} shows that $R^*_t(k)=O(k \ln k)$ for $t \leq \frac{1}{2}k - \omega(\sqrt{k \ln k})$, while Theorem~\ref{EP-fixed} shows that $R^*_t(k)=O(k^2)$ for $t = \frac{1}{2}k + O(1)$. Since $R^*_{f(k)}(k) \geq R_f(k)$, Theorem~\ref{KPPR-variable}\ref{KPPR-variable,b} implies that $R^*_t(k)$ is superpolynomial in $k$ if $t \geq \frac12 k+ \omega(\sqrt{k\ln k})$.

\subsection{Further related work}

We mention work on the fixed quasi-Ramsey problem by Chappell and Gimbel~\cite{ChGi11}.
Using an Erd\H{o}s--Szekeres-type recursion, they 
proved for $t \ge 1$ that
\begin{align*}
R^*_t(k) \le (k-t-1)\binom{2(t-1)}{t-1}+\binom{2t}{t}
 \le (k-t+3)4^{t-1}.
\end{align*}
They gave an exact formula for $R^*_t(k)$ when $t$ is small: if $1\le t \le \frac14(k+2)$, then $R^*_t(k) = k+2t-2$.  They also showed 
the lower bound of $k+2t-2$ holds for all $t\le \frac12(k+1)$; 
a construction certifying this is depicted in Figure~\ref{fig:lowerex}.

\begin{figure}[ht]
\centering
\begin{tikzpicture}[scale=1.6]
    \tikzstyle{ann} = [draw=none,fill=none,right]
     \filldraw[fill=black!25](-0.65,0.2)--(-0.65,-0.2)--(0.65,-0.2)--(0.65,0.2)--cycle;
     \filldraw[fill=black!70](-0.8,-0.15)--(-0.5,0.15)--(0.15,-0.85)--(-0.15,-1.15)--cycle;
     \draw (-0.65,0) node[draw,ellipse,minimum height=1.6cm,minimum width=1.4cm,fill=black!70] {$P$};
     \draw (0.65,0) node[draw,ellipse,minimum height=1.6cm,minimum width=1.4cm,fill=black!0] {$Q$};
     \draw (0,-1) node[draw,ellipse,minimum height=1.4cm,minimum width=1.6cm,fill=black!0] {$R$};
\end{tikzpicture}
\caption{\small{An illustration of the construction by Chappell and Gimbel that gives $R_t(k) \ge k+2t-2$ for all $t\le \frac12(k+1)$.  In this example, $P$ is a clique of order $2(t-1)$, $Q$ is a coclique of order $2(t-1)$, $R$ is a coclique of order $k-2t+1$, all possible edges between $P$ and $R$ are present, all possible edges between $Q$ and $R$ are absent, and the bipartite subgraph induced by the edges between $P$ and $Q$ is $(t-1)$-regular.  (Note that the subgraph on $R$ could instead be chosen arbitrarily.)}}
\label{fig:lowerex}
\end{figure}
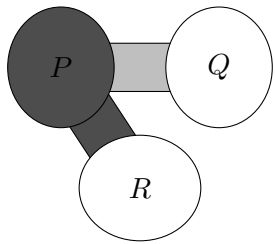

\paragraph{Notation.}
Chappell and Gimbel chose the complementary interpretation for $R^*_t(k)$ --- so the sets of order $k$ have {\em maximum} degree bounded by $t$ --- and referred to the parameters as {\em defective Ramsey numbers}.  Our $R^*_t(k)$ is essentially the same as what is $R^*_{t/k}(k)$ in the notation of Erd\H{o}s and Pach, while our $R_{f}$ 
slightly refines their $R_{\alpha}$ allowing for more precise statements.

\paragraph{Structure of the paper.}
We prove Theorem~\ref{KPPR-variable}\ref{KPPR-variable,a} in Section~\ref{sec:discrepancy}.
We prove Theorem~\ref{KPPR-variable}\ref{KPPR-variable,b} and discuss related results in Section~\ref{sec:lower}.
We state and prove the thinning approach and discuss its applications, such as Theorem~\ref{KPPR-fixed}, in Section~\ref{sec:thinning}.
In Section~\ref{sec:weighted}, we 
prove Theorem~\ref{thm:weighted}.
We give some concluding remarks and prompt some questions for further investigation in Section~\ref{sec:conclusion}.

\paragraph{Acknowledgement.}
We thank John Gimbel for stimulating discussions as well as for sending us the manuscript~\cite{ChGi11}.


\section{An upper bound using discrepancy}
\label{sec:discrepancy}

We use a result on graph discrepancy to prove Theorem~\ref{KPPR-variable}\ref{KPPR-variable,a}. 
Given a graph $G=(V,E)$, the {\em discrepancy} of a set $X\subseteq V$ is defined as
\begin{align*}
D(X):= e(X)-\frac{1}{2}\binom{|X|}{2},
\end{align*}
where $e(X)$ denotes the number of edges in the subgraph $G[X]$ induced by $X$.
We use a result of Erd\H{o}s and Spencer~\cite[Ch.~7]{ErSp74}, which is the same result used by Erd\H{o}s and Pach for
their proof of Theorem~\ref{EP-variable}\ref{EP-variable,a}.
\begin{lemma}[Theorem~7.1 of~\cite{ErSp74}]\label{lem:ES}
Provided $n$ is large enough, if $t \in\{1,\dots,n\}$, then any graph $G=(V,E)$ of order $n$ satisfies
\begin{align*}
\max _{S\subseteq V, |S|\le t} |D(S)|\ge \frac{t^{3/2}}{10^3}\sqrt{\ln(5n/t)}.
\end{align*}
\end{lemma}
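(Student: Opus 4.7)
The plan is to prove the discrepancy lower bound by a moment/probabilistic argument that leverages the fact that the signed adjacency matrix $B$ of $G$ --- the symmetric matrix with $b_{uv}=2\mathbf{1}_{uv\in E(G)}-1$ for $u\ne v$ and $b_{uu}=0$ --- has a $G$-independent Frobenius norm $\|B\|_F^2=n(n-1)$. Since $D(S)=\tfrac{1}{2}\chi_S^{\top}B\chi_S$, the task reduces to finding an indicator vector $\chi_S$ of Hamming weight at most $t$ whose quadratic form under $B$ is large.

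First, I would sample $S$ by including each vertex independently with probability $p=t/(2n)$, so that $|S|\le t$ with probability at least $\tfrac{1}{2}$ by Markov. A direct expansion of $D(S)^2$ over ordered pairs of edges of $K_n$, stratified by the overlap of the two pairs, shows that the diagonal contribution (pairs of identical edges) alone equals $\tfrac{1}{4}p^2(1-p^2)\binom{n}{2}$. Bounding the remaining contributions (which involve the signed degrees $s_v=\sum_{u\ne v}b_{uv}$ and the global discrepancy $D(V)$) in absolute value confirms $\mathbb{E}[D(S)^2]=\Omega(t^2)$. A Paley--Zygmund argument then produces some $S$ of size $\le t$ with $|D(S)|\ge c\,t$, but this falls short of the required bound by a factor of $\sqrt{t\ln(5n/t)}$.

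To recover that extra factor, I would either (i) pass to higher moments of $D(S)$, using that it is a degree-$2$ polynomial in independent Bernoulli indicators and therefore satisfies hypercontractive moment inequalities of Bonami--Beckner type: $\mathbb{E}|D(S)|^{2k}\le (Ck)^{2k}\mathbb{E}[D(S)^2]^{k}$ for every $k\ge 1$, giving sub-Gaussian tails at scale $\Theta(t)$; applied at the level $k=\Theta(\ln(5n/t))$ and combined with a reverse-Markov estimate this yields $|D(S)|=\Omega(t^{3/2}\sqrt{\ln(5n/t)})$. Alternatively, (ii) one can follow the more elementary route of the Erd\H{o}s--Spencer book, namely a greedy/iterative construction in which vertices are added to $S$ one at a time in the direction that maximally grows the signed edge count, the logarithmic gain arising from $\Theta(\ln(n/t))$ amplification rounds that each multiply the current discrepancy by a fixed constant.

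The main obstacle is controlling the logarithmic factor. The random variables $\{D(S):|S|\le t\}$ are strongly correlated (any two sets sharing $\ge 2$ vertices contribute common edges), so a naive union bound over the $\binom{n}{t}$ subsets treating them as independent sub-Gaussians is unavailable; either route (i) or (ii) requires a genuinely quadratic input --- either hypercontractivity for degree-$2$ chaos or a delicate combinatorial stopping rule --- to extract the full $\sqrt{\ln(5n/t)}$ amplification from a second moment of order $t^2$. Recovering the explicit constant $10^{-3}$ in the statement demands tight tracking of the concentration and second-moment constants throughout, together with verifying that the cross-term contributions in Step~2 genuinely cannot cancel the diagonal lower bound for any graph $G$.
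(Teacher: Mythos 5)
The paper does not prove this lemma at all: it cites it as Theorem~7.1 of the Erd\H{o}s--Spencer book \cite{ErSp74} and uses it as a black box, so there is no paper proof to compare yours against. Reviewing your proposal on its own terms, it has a genuine gap.

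Your second-moment computation is on the right track and correctly identifies the shortfall: for $S$ sampled at density $p=t/(2n)$, the pairs-of-edges that coincide dominate and give $\mathbb{E}[D(S)^2]=\Theta(t^2)$, so a Paley--Zygmund step can only produce some $|S|\le t$ with $|D(S)|=\Omega(t)$. The problem is that the fix you lean on in route~(i) runs in the wrong direction. The hypercontractive inequality $\mathbb{E}|D(S)|^{2k}\le (Ck)^{2k}\bigl(\mathbb{E}[D(S)^2]\bigr)^k$ is an \emph{upper} bound on moments; it implies sub-Gaussian \emph{upper} tails for the degree-$2$ chaos $D(S)$, which is useful for showing $\max_S|D(S)|$ is \emph{small} (e.g.\ via a union bound), not for showing it is large. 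No amount of reverse Markov or Paley--Zygmund applied to moment \emph{upper} bounds can certify that a random variable exceeds $K\sigma$ for $K\to\infty$; at best you get $|D(S)|\gtrsim \sigma=\Theta(t)$ with positive probability, exactly the bound you already had. To push the deviation up by a factor $\sqrt{t\ln(5n/t)}$ you would need a \emph{lower} bound on higher moments (or equivalently, anti-concentration at scale $\gg\sigma$), and hypercontractivity does not supply that. Route~(ii), the greedy/iterative construction which is indeed the flavor of the actual Erd\H{o}s--Spencer argument (add vertices one at a time, choosing the one whose signed degree deviates most from $\tfrac12|S|$, and aggregate the per-step gains of order $\sqrt{i}$ over $t$ steps to reach $t^{3/2}$), is the right place to look, but you do not develop it: you give no argument that a large per-step deviation exists, and you do not explain how the $\sqrt{\ln(5n/t)}$ factor appears. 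As written, the proposal does not constitute a proof, and the central probabilistic idea you emphasize cannot be made to work in the direction you need.
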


\begin{proof}[Proof of Theorem~\ref{KPPR-variable}\ref{KPPR-variable,a}]
Fix $\nu\ge0$ and let $G=(V,E)$ be any graph on at least $N =k^{c10^6\nu^2+4/3}$ vertices. For the theorem, it suffices to prove that $G$ or $\overline{G}$ has an induced subgraph $H$ on $\ell\ge k$ vertices with minimum degree at least $\frac12(\ell-1) +\nu \sqrt{(\ell-1)\ln \ell}$. For any $X\subseteq V$, we define the following skew form of discrepancy:
\begin{align*}
D_\nu(X):= |D(X)| - \nu\sqrt{|X|^3\ln|X|}.
\end{align*}
 
Let $X\subseteq V$ be a set attaining maximum skew discrepancy. 
By symmetry we may assume that $D(X)>0$.
Then for any $x\in X$ we have 
\begin{equation}
\deg_{G[X]}(x)\ge \tfrac12(|X|-1) +\nu \sqrt{|X|\ln|X|}.\label{eq:deg}
\end{equation}
To see \eqref{eq:deg}, suppose $x\in X$ has strictly smaller degree than claimed and set $X':=X\setminus \{x\}$.
Then
\begin{align*}
D_\nu(X')&\ge e(X')-\frac{1}{2}\binom{|X|-1}{2} -\nu\sqrt{(|X|-1)^3\ln(|X|-1)}	\nonumber
\\
&>e(X)-\frac{1}{2}\binom{|X|}{2} -\nu \sqrt{|X|\ln |X|}-\nu\sqrt{(|X|-1)^3\ln(|X|-1)}.	
\end{align*}
Note that $\sqrt{|X|^3\ln |X|}>\sqrt{|X|\ln |X|}+\sqrt{(|X|-1)^3\ln(|X|-1)}$,
which by the above  
implies $D_\nu(X')>D_\nu(X)$, contradicting the maximality of $D_\nu(X)$.

If $k$ is large enough, then by Lemma \ref{lem:ES} there exists a set of at most $k^{4/3}$ vertices with discrepancy at least $\nu k^2\sqrt{c\ln k}$. Recall that $c>4/3$. So, in evaluating this set's skew discrepancy, the ordinary discrepancy term will dominate the `skew term', which is equal to $-\nu k^2\sqrt{4/3\ln k}$.
We may thus assume that $D_\nu(X)\geq k^2$ if $k$ is large enough, but now note that this implies that $|X| \ge k$, as required.
\end{proof}

This argument is considerably shorter than Erd\H{o}s and Pach's proof of Theorem~\ref{EP-variable}\ref{EP-variable,a}. If we follow the original approach more closely, then after appropriate adjustments we can obtain a slight improvement upon Theorem~\ref{KPPR-variable}\ref{KPPR-variable,a} whereby $k^{c10^6\nu^2+4/3}$ with $c>4/3$ is replaced by $200(k\ln k)^{c10^6\nu^2+1}$ with $c>1$. Note that with $\nu=0$ this results in a bound akin to Theorem~\ref{EP-variable}\ref{EP-variable,a}. For clarity of exposition, we elected for the shorter argument, which still yields the threshold phenomenon we desire.


\section{Random graph lower bounds}
\label{sec:lower}

Next we give probabilistic lower bounds for the quasi-Ramsey numbers. 
We elaborate on an observation by Erd\H{o}s and Pach.
We apply upper bounds on the order of largest $t$-homogeneous sets in random graphs to extend the classic lower bounds on $R(k)$~\cite{Erd47,Spe75}.
We rely on analysis from~\cite{KaMc10}, which amongst other things thoroughly describes the expected behaviour of {\em $t$-dependent sets} --- i.e.~vertex subsets that induce subgraphs of maximum degree at most $t$ --- in the random graph $\G{n}{1/2}$ with vertex set $[n] = \{1,\dots,n\}$ and edge probability $\frac12$.
We need a result best stated with large deviations notation.  For more on large deviations, consult~\cite{DeZe98}.  Let
\begin{align*}
\Lambda^*(x) = 
\begin{cases}
x \ln (2x) + (1 - x) \ln (2(1 - x)) & \mbox{for $x\in [0, 1]$}\\
\infty & \mbox{otherwise}
\end{cases}
\end{align*}
(where $\Lambda^*(0) = \Lambda^*(1) = \ln 2$).
This is the Fenchel-Legendre transform of the logarithmic moment generating function associated with the Bernoulli distribution with probability $\frac12$ (cf.~Exercise 2.2.23(b) of~\cite{DeZe98}).  Some easy calculus checks that $\Lambda^*(x)$ has a global minimum of $0$ at $x = \frac12$, is strictly decreasing on $[0, \frac12)$ and strictly increasing on $(\frac12,1]$.  The following bounds the probability that a given subset of order $k$ in $\G{n}{1/2}$ is $t$-dependent.

\begin{lemma}[Lemma~2.2(i) of~\cite{KaMc10}]
\label{lem:A_n}
Given $\bar{t},k$ with $\bar{t} \le \frac12(k - 1)$,
\begin{align*}
\Pr(\Delta(\G{k}{1/2}) \le \bar{t}) \le \exp\left(-\binom{k}{2} \Lambda^*\left( \frac{\bar{t}}{k - 1} \right) \right).
\end{align*}
\end{lemma}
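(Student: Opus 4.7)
The plan is to pass from the event $\{\Delta(\G{k}{1/2}) \le \bar{t}\}$ to an event about the total edge count of $\G{k}{1/2}$, and then apply a standard Cramér/Chernoff large deviations estimate for the binomial. Specifically, if $G \sim \G{k}{1/2}$ and $\Delta(G) \le \bar{t}$, then by the handshake identity
\[
2e(G) = \sum_{v} \deg_G(v) \le k\bar{t},
\]
so $e(G) \le k\bar{t}/2 = \binom{k}{2} \cdot \bar{t}/(k-1)$. This gives the inclusion
\[
\{\Delta(\G{k}{1/2}) \le \bar{t}\} \subseteq \{e(\G{k}{1/2}) \le \tbinom{k}{2} \cdot \bar{t}/(k-1)\}.
\]

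Next I would observe that $e(\G{k}{1/2})$ is distributed as $\mathrm{Bin}(\binom{k}{2}, 1/2)$, since each of the $\binom{k}{2}$ potential edges is present independently with probability $\tfrac12$. Writing $X = e(\G{k}{1/2})$ and setting $x = \bar{t}/(k-1)$, the hypothesis $\bar{t} \le (k-1)/2$ ensures $x \le \tfrac12$, i.e.\ $x$ lies below the mean of $X/\binom{k}{2}$, so the event in question is a genuine lower-tail deviation.

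At this point the result is a direct application of the Cramér/Chernoff upper bound: for a sum of $n$ i.i.d.\ Bernoulli$(1/2)$ random variables $X$ and any $x \le 1/2$,
\[
\Pr(X \le xn) \le \exp\bigl(-n \Lambda^*(x)\bigr),
\]
where $\Lambda^*$ is the Fenchel-Legendre transform of the Bernoulli$(1/2)$ log-moment generating function as defined in the paper. This is precisely Exercise~2.2.23(b) of~\cite{DeZe98} cited above, or equivalently follows from the standard argument $\Pr(X \le xn) \le \inf_{\lambda \le 0} e^{-\lambda x n} \Exp e^{\lambda X}$. Applying this with $n = \binom{k}{2}$ and $x = \bar{t}/(k-1)$ yields the stated bound.

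There is no serious obstacle here; the only thing one must be a little careful about is that the inclusion $\{\Delta \le \bar t\} \subseteq \{e(G) \le k\bar t/2\}$ is the right direction (turning an event about the hardest-to-analyse statistic, the maximum of dependent degree counts, into an event about their sum), and that $k\bar t/2$ really coincides with $\binom{k}{2}\cdot \bar{t}/(k-1)$ so that the Chernoff exponent $\binom{k}{2}\Lambda^*(\bar{t}/(k-1))$ comes out exactly as stated.
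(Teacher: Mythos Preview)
Your argument is correct. The paper does not supply its own proof of this lemma: it is quoted verbatim from~\cite{KaMc10} and used as a black box, so there is nothing in the present paper to compare against. For what it is worth, the route you take --- the handshake identity to pass from $\{\Delta \le \bar t\}$ to $\{e(G) \le \binom{k}{2}\bar t/(k-1)\}$, followed by the Cram\'er--Chernoff bound for $e(\G{k}{1/2}) \sim \mathrm{Bin}(\binom{k}{2},\tfrac12)$ --- is the standard and essentially only natural proof, and your check that $k\bar t/2 = \binom{k}{2}\cdot \bar t/(k-1)$ exactly (so the exponent comes out with no slack) is the one point worth noting.
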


\begin{proposition} \label{prop:lower}
For any $\eps\ge0$ let $f(\ell)$ be any function satisfying $f(\ell) \ge (\frac{1}{2} + \eps)(\ell-1)$ for all $\ell$. Then, as $k\to\infty$,
\begin{align*}
R_{f}(k) \ge (1+o(1)) \frac{k}{e}\exp\left(\frac{k-1}2\Lambda^*\left(\frac12-\eps\right)\right).
\end{align*}
\end{proposition}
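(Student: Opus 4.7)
The plan is a standard first-moment / deletion argument on the random graph $\G{n}{1/2}$, in the spirit of the classical Erd\H{o}s lower bound on $R(k)$. Set $n = (1-\delta)\tfrac{k}{e}\exp\!\left(\tfrac{k-1}{2}\Lambda^*(\tfrac12-\eps)\right)$ for arbitrary small $\delta>0$, and let $X$ be the number of sets $S\subseteq[n]$ of size $\ell\ge k$ such that $S$ is $f(\ell)$-homogeneous in $\G{n}{1/2}$. It suffices to show $\Pr(X\ge 1)<1$, i.e.~$\Exp X<1$, since this produces a graph avoiding all such $f(\ell)$-homogeneous sets and thus gives the claimed lower bound on $R_f(k)$.

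For a fixed $\ell$-set $S$, being $f(\ell)$-homogeneous means $G[S]$ has minimum degree at least $f(\ell)$ or maximum degree at most $(\ell-1)-f(\ell)$. Since $\G{\ell}{1/2}$ is self-complementary in distribution, each of these events has the same probability, so by Lemma~\ref{lem:A_n} applied with $\bar t=(\ell-1)-f(\ell)\le(\tfrac12-\eps)(\ell-1)$,
\begin{align*}
\Pr(S \text{ is } f(\ell)\text{-homogeneous})
\le 2\exp\!\left(-\binom{\ell}{2}\Lambda^*\!\left(\tfrac{(\ell-1)-f(\ell)}{\ell-1}\right)\right)
\le 2\exp\!\left(-\binom{\ell}{2}\Lambda^*\!\left(\tfrac12-\eps\right)\right),
\end{align*}
using that $\Lambda^*$ is strictly decreasing on $[0,\tfrac12)$ together with the hypothesis $f(\ell)\ge(\tfrac12+\eps)(\ell-1)$. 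Writing $A:=\Lambda^*(\tfrac12-\eps)$ and using $\binom{n}{\ell}\le(en/\ell)^\ell$, linearity of expectation gives
\begin{align*}
\Exp X \le \sum_{\ell=k}^{n} 2\binom{n}{\ell}\exp\!\left(-\binom{\ell}{2}A\right)
\le \sum_{\ell=k}^{n} 2\exp\!\left(\ell\ln(en/\ell) - \tfrac{\ell(\ell-1)}{2}A\right).
\end{align*}

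The remaining step is a direct calculation. Plugging in the chosen $n$, the $\ell=k$ term has exponent $k\ln(1-\delta)=-\Theta(k)$, so that term is $o(1)$. To handle the full sum, one checks that the summand is log-concave (equivalently, that the discrete derivative $\ln(n/\ell) - A(\ell-\tfrac12)$ is decreasing in $\ell$), so the sequence is bounded by a geometric series with ratio strictly less than $1$ starting at $\ell=k$; summing yields $\Exp X = o(1)$ as $k\to\infty$. Since $\delta>0$ was arbitrary, the bound on $R_f(k)$ follows.

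The main obstacle I anticipate is purely technical: choosing the constant $k/e$ correctly in the exponential forces one to carry all lower-order terms ($\ln(en/k)$ versus $\tfrac{k-1}{2}A$) through the estimate and to check that the $\ell=k$ term actually dominates the sum over $\ell\ge k$. The use of Stirling rather than the crude $(en/\ell)^\ell$ may be needed to capture the factor $1/e$; beyond that, everything is first moment and monotonicity of $\Lambda^*$.
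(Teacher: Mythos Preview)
Your proposal is correct and follows essentially the same first-moment argument as the paper. The final summation is in fact simpler than you anticipate: since both $en/\ell$ and $\exp\!\bigl(-\tfrac{\ell-1}{2}A\bigr)$ are non-increasing in $\ell\ge k$, each summand is at most $\bigl(\tfrac{en}{k}e^{-(k-1)A/2}\bigr)^\ell=(1-\delta)^\ell$, giving a geometric series directly --- no log-concavity analysis or Stirling refinement is needed, and the crude bound $\binom{n}{\ell}\le(en/\ell)^\ell$ already captures the factor $k/e$.
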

\begin{proof}
For any $\delta>0$ and some large enough integer $k$, let
\begin{align*}
n = \left\lfloor \frac1{1+\delta}\frac{k}{e}\exp\left(\frac{k-1}2\Lambda^*\left(\frac12-\eps\right)\right) \right\rfloor.
\end{align*}
Consider the random graph $G \sim \G{n}{1/2}$. Given a subset $S \subseteq [n]$ of $\ell\ge k$ vertices, let $A_S$ be the event that $\delta(G[S])\ge f(\ell)$ or $\delta(\overline{G}[S])\ge f(\ell)$, where $\delta(\cdot)$ denotes the minimum degree of the graph.
Since $\eps\ge0$, we have by Lemma~\ref{lem:A_n} that
\begin{align*}
\Pr(A_S)
& \le 2\exp\left(-\binom{\ell}{2}\Lambda^*\left(\frac{\ell-f(\ell)-1}{\ell-1}\right)\right)
 \le 2\exp\left(-\binom{\ell}{2}\Lambda^*\left(\frac12-\eps\right)\right).
\end{align*}
Note that we allow the possibility that $\eps > 1/2$, in which case the above inequality gives $\Pr(A_S) \leq 0$.
So the probability that $A_S$ holds for some set $S \subseteq [n]$ of $\ell\ge k$ vertices is at most 
\begin{align*}
\sum_{S \subseteq [n], |S| \ge k} \Pr(A_S)
& \le \sum_{\ell = k}^n \binom{n}{\ell} 2\exp\left(-\binom{\ell}{2}\Lambda^*\left(\frac12-\eps\right)\right)\\ 
& \le 2\sum_{\ell = k}^n \left(\frac{en}\ell\cdot \exp\left(-\frac{\ell-1}{2}\Lambda^*\left(\frac{1}{2}-\eps\right)\right) \right)^{\ell} \\
& \le 2\sum_{\ell=k}^n (1+\delta)^{-\ell}< 1,
\end{align*}
where in this sequence of inequalities we have used the definition of $n$, the fact that $\ell\ge k$, and a choice of $k$ large enough.
Thus, for $k$ large enough, there exists a graph on $n$ vertices for which each induced subgraph of order $\ell\ge k$ and its complement have minimum degree less than $f(\ell)$. Since we proved this statement holds for any $\delta>0$, the result follows.
\end{proof}

As we see now, Theorem~\ref{KPPR-variable}\ref{KPPR-variable,b} follows the same argument.

\begin{proof}[Proof of Theorem~\ref{KPPR-variable}\ref{KPPR-variable,b}]
Into the proof of Proposition~\ref{prop:lower}, we substitute
\begin{align*}
\eps = \eps(\ell) = \nu(\ell) \sqrt{\frac{\ln \ell}{\ell-1}}.
\end{align*}
By the Taylor expansion of $\Lambda^*$ (for $0 \leq \eps \leq 1/2$), we have that
\begin{align*}
\Lambda^*\left(\frac12-\eps\right) 
& = \left(\frac12-\eps\right)\ln(1-2\eps) + \left(\frac12+\eps\right)\ln(1+2\eps) \\
& = \sum_{j=1}^\infty\frac{(2\eps)^{2j}}{2j(2j-1)} \ge 2\eps^2.
\end{align*}
Note that $\Lambda^*\left(\frac12-\eps\right) \geq 2\eps^2$ in fact
holds for all $\eps \geq 0$. 
Now for any $\delta>0$ let
\begin{align*}
n = \left\lfloor \frac{1}{1+\delta} \frac{k^{\nu(k)^2+1}}e \right\rfloor,
\end{align*}
where $k$ is some large enough integer.
Again consider the random graph $G \sim \G{n}{1/2}$. Let $f(\ell) = (\frac{1}{2} + \eps(\ell))(\ell-1)$ and $A_S$ be as in Proposition~\ref{prop:lower}. As we did before, but also using the Taylor expansion above, we obtain that the probability $A_S$ holds for some set $S \subseteq [n]$ of $\ell\ge k$ vertices is at most 
\begin{align*}
\sum_{S \subseteq [n], |S| \ge k} \Pr(A_S)
& \le 2\sum_{\ell = k}^n \left(\frac{en}\ell\cdot \exp\left(-(\ell-1)\eps^2\right) \right)^{\ell}
 = 2\sum_{\ell = k}^n \left(\frac{en}{\ell^{\nu(\ell)^2+1}} \right)^{\ell} \\
& \le 2\sum_{\ell=k}^n (1+\delta)^{-\ell}< 1,
\end{align*}
by the choice of $n$, $\ell\ge k$, $\nu(\ell)\ge\nu(k)$, and $k$ large enough.
Thus, for $k$ large enough, there is a graph on $n$ vertices where each induced subgraph of order $\ell\ge k$ and its complement have minimum degree less than $f(\ell)$. This holds for any $\delta>0$, so the result follows.
\end{proof}

For the fixed quasi-Ramsey numbers $R^*_t(k)$, we can get a constant factor improvement upon the bound implied by Proposition~\ref{prop:lower} by additionally using the Lov\'asz Local Lemma 
as Spencer~\cite{Spe75} did for $R(k)$. In particular, for $t = t(k) \ge (\frac12+\eps)(k-1)$, the factor is $\exp\left(\Lambda^*\left(\frac12-\eps\right)\right)$. This is standard and the calculations are similar to those used above, so we omit the proof.  

 \begin{proposition}
 \label{prop:lower,spencer}
 For $\eps \ge 0$, let $t = t(k) \ge (\frac12+\eps)(k-1)$.  Then, as $k\to\infty$,
 \begin{align*}
 R^*_t(k) \ge (1+o(1))\frac{k}{e}\exp\left(\frac{k+1}2 \Lambda^*\left(\frac12-\eps\right)\right).
 \end{align*}
 \end{proposition}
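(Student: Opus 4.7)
The plan is to adapt the proof of Proposition~\ref{prop:lower}, replacing the union bound over $k$-subsets by an application of the symmetric Lov\'asz Local Lemma, in direct analogy with Spencer's~\cite{Spe75} classical improvement on Erd\H{o}s' lower bound for $R(k)$. The extra factor $\exp(\Lambda^*(\frac12 - \eps))$ in the base of the exponential will come from exploiting the fact that the events we wish to avoid are determined solely by the edges inside a given $k$-subset, so that two such events are mutually independent whenever the corresponding subsets share at most one vertex.

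Concretely, for a parameter $\delta>0$, large $k$, and
$n = \lfloor \frac{1}{1+\delta} \frac{k}{e}\exp(\frac{k+1}{2}\Lambda^*(\frac12-\eps)) \rfloor$, I would take $G \sim \G{n}{1/2}$ and, for each $k$-subset $S \subseteq [n]$, let $A_S$ be the event that $\delta(G[S]) \ge t$ or $\delta(\overline{G}[S]) \ge t$. Lemma~\ref{lem:A_n}, together with the hypothesis $t\ge(\frac12+\eps)(k-1)$ and the monotonicity of $\Lambda^*$ on $[0,\frac12]$, yields $\Pr(A_S) \le 2\exp(-\binom{k}{2}\Lambda^*(\frac12-\eps))$. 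The key structural observation is that $A_S$ depends only on the $\binom{k}{2}$ potential edges within $S$, so $A_S$ and $A_{S'}$ are mutually independent whenever $|S \cap S'| \le 1$; hence the dependency graph of the $A_S$ has maximum degree at most $d := \binom{k}{2}\binom{n-2}{k-2}$.

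It then remains to verify the symmetric LLL condition $e p (d+1) \le 1$ for the chosen $n$, where $p$ denotes the above probability bound. Using $\binom{n-2}{k-2}\le (en/(k-2))^{k-2}$ and substituting the value of $n$, the logarithm of the left-hand side reduces, up to additive $O(\ln k)$, to $\tfrac{1}{2}\Lambda^*(\tfrac12-\eps)\bigl[(k-2)(k+1) - k(k-1)\bigr] - (k-2)\ln(1+\delta)$. The identity $(k-2)(k+1) - k(k-1) = -2$ collapses the quadratic contribution to the constant $-\Lambda^*(\frac12-\eps)$, and this small algebraic cancellation is precisely what accounts for the $\frac{k+1}{2}$ in the statement in place of the $\frac{k-1}{2}$ appearing in Proposition~\ref{prop:lower}. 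The condition will therefore hold for sufficiently large $k$, so with positive probability no $A_S$ occurs, whence $R^*_t(k) > n$; letting $\delta\to 0$ will give the claimed bound. I anticipate no genuine obstacle beyond this routine algebraic verification, which is the reason the authors omit the proof.
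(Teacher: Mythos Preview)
Your proposal is correct and is exactly the approach the paper intends: it applies the symmetric Lov\'asz Local Lemma to the events $A_S$ over $k$-subsets, with dependency degree at most $\binom{k}{2}\binom{n-2}{k-2}$, precisely as in Spencer's improvement for $R(k)$. The paper omits the details for the same reason you anticipate, and your identification of the cancellation $(k-2)(k+1)-k(k-1)=-2$ correctly isolates the extra factor $\exp(\Lambda^*(\tfrac12-\eps))$ that distinguishes this from Proposition~\ref{prop:lower}.
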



\section{A thinning argument for upper bounds}
\label{sec:thinning}
We start this section by explicitly stating our thinning approach.
\begin{lemma}
\label{lem:thinning}
For any $0<c<1$ and $\eps>0$, let $k$ be such that
\begin{align}
\label{eqn:K}
\exp\left(\frac12\eps^2(k-1)\right) > k.
\end{align}
If $H$ is a graph of order $\ell \ge k$ such that $\delta(H) \ge c\ell$,  then there exists $S \subseteq V(H)$ of order $k$ such that $\delta(H[S]) \ge (c-\eps)(k-1)$.
\end{lemma}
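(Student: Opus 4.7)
The natural route is a direct probabilistic argument: sample $S$ uniformly at random from the $k$-subsets of $V(H)$ and show that with positive probability every vertex of $S$ retains almost the full proportion of its $H$-neighbours inside $S$. Call $v\in S$ \emph{bad} if $|N_H(v)\cap S|<(c-\eps)(k-1)$; we just need the expected number of bad vertices to be strictly less than $1$.

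Fix $v\in V(H)$ and condition on $v\in S$. Then $S\setminus\{v\}$ is a uniformly random $(k-1)$-subset of $V(H)\setminus\{v\}$, so $X_v:=|N_H(v)\cap S|$ is hypergeometric with sample size $k-1$ drawn from $\ell-1$, among which $\deg_H(v)\ge c\ell$ are marked. In particular
\[
\Exp[X_v\mid v\in S]\;\ge\;(k-1)\,\frac{c\ell}{\ell-1}\;\ge\;c(k-1),
\]
so a bad vertex represents a deviation of at least $\eps(k-1)$ below the mean. I would invoke the Hoeffding--Chv\'atal tail inequality for the hypergeometric distribution (which dominates the Bernoulli case) to obtain
\[
\Pr\bigl[\,v\text{ is bad}\mid v\in S\,\bigr]\;\le\;\exp\bigl(-2\eps^{2}(k-1)\bigr)\;<\;\exp\bigl(-\tfrac12\eps^{2}(k-1)\bigr)\;<\;\tfrac{1}{k},
\]
where the last inequality is exactly hypothesis~\eqref{eqn:K}.

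Linearity of expectation then gives
\[
\Exp\bigl[\,\#\{v\in S:v\text{ bad}\}\,\bigr]\;=\;\sum_{v\in V(H)}\Pr[v\in S]\cdot\Pr[v\text{ bad}\mid v\in S]\;<\;\ell\cdot\frac{k}{\ell}\cdot\frac{1}{k}\;=\;1.
\]
Therefore some realisation of $S$ contains no bad vertex, and that $S$ witnesses $\delta(H[S])\ge (c-\eps)(k-1)$.

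The one delicate point is the choice of tail bound: we are sampling without replacement, so I would quote the Chv\'atal form of Hoeffding's inequality (or equivalently the fact that the hypergeometric is a convex combination of binomials of the same mean, hence has no larger lower tail) rather than invoke Chernoff directly. Everything else is routine, and the strictness in~\eqref{eqn:K} is exactly what allows the expected count of bad vertices to drop below $1$.
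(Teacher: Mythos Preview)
Your proof is correct and is essentially the same argument as the paper's: the paper phrases it as a double count of ``bad pairs'' $(v,T)$ and shows a uniformly random pair is bad with probability less than $1/k$, which is identical to your linearity-of-expectation computation that the expected number of bad vertices in a random $k$-set is below~$1$. The only cosmetic difference is the tail bound invoked --- the paper uses the JLR form $\Pr(X\le ab/N-d)\le\exp(-d^2N/(2ab))$ to get $\exp(-\tfrac12\eps^2(k-1))$ directly, whereas your Hoeffding--Chv\'atal bound $\exp(-2\eps^2(k-1))$ is sharper but then weakened to match hypothesis~\eqref{eqn:K}.
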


For Lemma~\ref{lem:thinning}, we require a Chernoff-type bound for the hypergeometric distribution.
Given positive integers $N$, $b$, $a$ with $a,b \le N$, choose $S \subseteq [N]$ with $|S|=b$ uniformly at random (u.a.r.).  The random variable given by $X = |S \cap [a]|$ is a hypergeometric random variable with parameters $N$, $b$, $a$.

\begin{lemma}[Theorem~2.10 and (2.6) of~\cite{JLR00}] \label{lem:Chernoff}
If $X$ is a hypergeometric random variable with parameters $N$, $b$, $a$, and $d \ge 0$, then
\[
\Pr \left( X \le \frac{ab}{N} - d \right) \le \exp\left( -\frac{d^2N}{2ab} \right).
\]                      
\end{lemma}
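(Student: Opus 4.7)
The plan is to apply the standard Chernoff/moment-generating-function argument, reducing the hypergeometric moment generating function to the binomial one via a classical result of Hoeffding. Write $\mu = ab/N$ for the mean of $X$, and assume $0 \le d \le \mu$ (for $d > \mu$ the claim is trivial since $X \ge 0$). For any $\lambda > 0$, Markov's inequality applied to $e^{-\lambda X}$ gives
$$\Pr(X \le \mu - d) \le e^{\lambda(\mu - d)}\,\Exp[e^{-\lambda X}].$$

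The heart of the argument is to bound $\Exp[e^{-\lambda X}]$ by the corresponding binomial moment generating function. Hoeffding's theorem (1963) asserts that if $X$ is the hypergeometric random variable described above and $Y$ is binomial with parameters $b$ and $p = a/N$, then $\Exp[\phi(X)] \le \Exp[\phi(Y)]$ for every continuous convex function $\phi$. The standard short proof represents the without-replacement sum $X = \sum_{i=1}^b X_i$ as an average, over all $b!$ orderings of a fixed uniformly chosen $b$-subset, of a suitable sum and then invokes Jensen's inequality; equivalently, one shows that $X$ has the same distribution as a conditional expectation of $Y$ given an appropriate symmetrization, and applies Jensen once more. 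Taking $\phi(x) = e^{-\lambda x}$ and then using $\ln(1+x) \le x$ we obtain
$$\Exp[e^{-\lambda X}] \le (1 - p + p e^{-\lambda})^b \le \exp\bigl(bp(e^{-\lambda} - 1)\bigr) = \exp\bigl(\mu(e^{-\lambda} - 1)\bigr).$$

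It remains to optimize in $\lambda$. Choosing $\lambda = \ln(\mu/(\mu - d))$ and substituting back, the right-hand side becomes $\exp(\mu[-(1 - t)\ln(1 - t) - t])$ with $t = d/\mu \in [0, 1)$. A one-line Taylor-series computation shows $-(1 - t)\ln(1 - t) - t \le -t^2/2$ for $t \in [0, 1)$ (indeed, expanding $\ln(1-t) = -\sum_{j \ge 1} t^j/j$ yields $-(1-t)\ln(1-t) - t = -\sum_{j \ge 2} t^j/(j(j-1))$), whence
$$\Pr(X \le \mu - d) \le \exp\left(-\frac{d^2}{2\mu}\right) = \exp\left(-\frac{d^2 N}{2ab}\right),$$
as claimed.

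I expect the only nontrivial ingredient to be Hoeffding's reduction to the binomial; once one has the MGF comparison, everything that follows is a textbook Chernoff calculation. An alternative route, avoiding an explicit appeal to Hoeffding, would be to bound the hypergeometric MGF directly by exploiting the negative association of the indicator variables $X_1, \ldots, X_b$ corresponding to successive draws (which ensures $\Exp[\prod_i e^{-\lambda X_i}] \le \prod_i \Exp[e^{-\lambda X_i}]$), but this is essentially the same argument wrapped in different language.
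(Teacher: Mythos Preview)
Your argument is correct: Hoeffding's convex-order comparison reduces the hypergeometric MGF to the binomial one, and the remaining Chernoff optimisation is carried out cleanly (your series identity $-(1-t)\ln(1-t)-t=-\sum_{j\ge 2}t^j/(j(j-1))$ indeed gives the required $-t^2/2$ bound). Note, however, that the paper does not prove this lemma at all --- it simply quotes it as Theorem~2.10 and inequality~(2.6) of Janson--\L{}uczak--Ruci\'nski, whose proof is precisely the Hoeffding-reduction-plus-Chernoff route you give; so there is nothing further to compare.
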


\begin{proof}[Proof of Lemma~\ref{lem:thinning}]
Assume $c$, $\eps$, $k$, and $H$ are as in the statement of the lemma.
Given a vertex $v \in V(H)$ and a subset $T \subseteq V(H)\setminus\{v\}$ of order $k-1$, we call $(v,T)$ a {\em pair}.
We say that a pair $(v,T)$ is {\em good} if $\deg_T(v) \ge (c-\eps)(k-1)$; otherwise it is {\em bad}.
Given a subset $U \subseteq V(H)$ of order $k$, we say it is {\em good} if $(w,U\setminus \{w\})$ is good for all $w\in U$; otherwise it is {\em bad}. 

Note that if we can find a good $U$ in $H$, then we are done.  Also observe that if $U$ is bad for all $U \subseteq V(H)$ of order $k$, then there must be at least $\binom{\ell}{k}$ distinct bad pairs.
However, there are $\ell\binom{\ell-1}{k-1}$ pairs in total.
So there exists a good $U$ provided that, when choosing a pair $(v,T)$ u.a.r.,
\begin{align*}
\Pr((v,T)\text{ is bad}) < \left.\binom{\ell}k\right/\ell\binom{\ell-1}{k-1} = \frac1k.
\end{align*}
We pick $(v,T)$ u.a.r.~by choosing $v$ u.a.r.~before choosing $T$ u.a.r.  Note that, given $v$ and a uniform choice of subset $T\subseteq V(H)\setminus\{v\}$ of order $k-1$, the random variable $\deg_T(v)$ has a hypergeometric distribution with parameters $\ell$, $k-1$, $\deg(v)$.  Since $c\ell\le\deg(v)\le\ell$, we have by Lemma~\ref{lem:Chernoff} that
\begin{align*}
\Pr((v,T)\text{ is bad} \ | \ v)
& = \Pr(\deg_T(v) < (c-\eps)(k-1)) \\
& \le \Pr\left(\deg_T(v) < \frac{\deg(v)(k-1)}\ell - \eps(k-1)\right) \\
& \le \exp\left(-\frac{\eps^2(k-1)\ell}{2\deg(v)}\right)
 \le \exp\left(-\frac12\eps^2(k-1)\right).
\end{align*}
By~\eqref{eqn:K}, the last quantity is less than $1/k$ so
it follows that
\begin{align*}
\Pr((v,T)\text{ is bad})
& = \frac1\ell\sum_v\Pr((v,T)\text{ is bad} \ | \ v) < \frac1k,
\end{align*}
as desired.
\end{proof}

\noindent
Our first application of the thinning lemma is the following upper bound for $R^*_t(k)$. This complements the bounds of Chappell and Gimbel mentioned in the introduction. Since it is not close to the lower bound, we did not attempt to optimise it, though it can easily be improved to roughly $(\eps^{-1/2}\sqrt{1/2+\eps})\cdot k$.

\begin{theorem} \label{thm:upper}
Let $\eps > 0$.
If $t = t(k) \le (\frac12-\eps)(k-1)$, then
\begin{align*}
R^*_t(k) \le \eps^{-1/2}\sqrt{1+\eps}\cdot(k+o(k)).
\end{align*}
\end{theorem}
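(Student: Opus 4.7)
The plan is to combine a classical greedy minimum-degree argument with the thinning Lemma~\ref{lem:thinning}. Given $G$ on $N$ vertices with $N$ slightly above $\eps^{-1/2}\sqrt{1+\eps}\cdot k$, I would first pass to $\overline{G}$ if necessary so that $e(G)\ge\binom{N}{2}/2$; a $t$-homogeneous set in $G$ is the same notion in $\overline{G}$, so this reduction is free.

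Next, I would iteratively peel off a vertex of degree less than $(\tfrac{1}{2}-\eps/2)\,n$ from the current subgraph of order $n$. Upon termination, the resulting induced subgraph $H$ on $\ell$ vertices satisfies $\delta(H)\ge(\tfrac{1}{2}-\eps/2)\ell$, and an edge-count forces $\ell\ge k$: each deletion destroys fewer than $(\tfrac{1}{2}-\eps/2)\,n$ edges, so peeling all the way down to $k-1$ vertices would remove at most $(\tfrac{1}{2}-\eps/2)\cdot(N+k)(N-k+1)/2$ edges from the initial total of at least $N(N-1)/4$. The choice of $N$ arranges that the leftover surplus strictly exceeds $\binom{k-1}{2}$, which is absurd.

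Finally, I would apply Lemma~\ref{lem:thinning} to $H$ with $c=\tfrac{1}{2}-\eps/2$ and with the thinning gap parameter set to $\eps/2$. For all sufficiently large $k$ (depending on $\eps$), condition~\eqref{eqn:K} is satisfied, and the lemma produces $S\subseteq V(H)$ with $|S|=k$ and
\[
\delta(H[S])\ge\left(\tfrac12-\tfrac{\eps}{2}-\tfrac{\eps}{2}\right)(k-1)=\left(\tfrac12-\eps\right)(k-1)\ge t,
\]
so $S$ (in $G$ or $\overline{G}$, whichever we reduced to) is the desired $t$-homogeneous $k$-set.

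The main obstacle is the budget-split of $\eps$ between the two stages: the greedy step requires $c$ strictly below $\tfrac{1}{2}$ to guarantee $\ell\ge k$, while the thinning step eats further into $c$ to reach the target threshold $t$. Splitting $\eps$ evenly between the two stages yields precisely the constant $\eps^{-1/2}\sqrt{1+\eps}$ appearing in the statement. Letting the thinning gap tend to zero as slowly as~\eqref{eqn:K} permits (of order $\sqrt{(\ln k)/k}$) would instead put essentially all of $\eps$ into the greedy step and recover the sharper constant $\eps^{-1/2}\sqrt{\tfrac{1}{2}+\eps}$ noted just before the theorem; the bookkeeping is more delicate but follows the same template.
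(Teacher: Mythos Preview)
Your proposal is correct and matches the paper's proof essentially step for step: the paper also passes to the denser of $G,\overline{G}$, applies the greedy peel with threshold $\alpha=\tfrac12(1-\eps)=\tfrac12-\eps/2$ (quoted there as a black-box lemma from Bollob\'as rather than via your explicit edge-count), and then invokes Lemma~\ref{lem:thinning} with $c=\tfrac12-\eps/2$ and gap $\eps/2$. Your closing remark on recovering the sharper constant $\eps^{-1/2}\sqrt{\tfrac12+\eps}$ by shrinking the thinning gap to order $\sqrt{(\ln k)/k}$ is also exactly what the paper alludes to just before the theorem.
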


\begin{proof}
Choose $k$ large enough so that it satisfies~\eqref{eqn:K} with $\eps$ halved, and
let $G$ be a graph of order
$n \ge \eps^{-1/2}\sqrt{1+\eps}(k+\gamma k)$
for some small fixed $\gamma > 0$.
By considering $G$ or its complement, we may assume without loss of generality that $|E(G)| \ge \frac12\binom{n}2$.
We require the following explicit form of Theorem~\ref{EPbasic}\ref{EPbasic,a}. This is essentially given as Exercise~12.8 in~\cite{Bol01}, so we omit the proof. (The idea is to repeatedly remove any vertex of too small degree.)

\begin{lemma}
Let $0 \le \alpha < 1/2$ and suppose that
\begin{align*}
n \ge \frac{\sqrt{1-\alpha}}{\left(\tfrac12-\alpha\right)^{1/2}}\cdot k \cdot\left(1+\frac1{k(1-\alpha)\left(\tfrac12-\alpha\right)^{1/2}}\right)^{1/2}
\end{align*} 
(for $k$ chosen large enough). 
If $G$ is a graph with $|V(G)| = n$ and $|E(G)| \ge \frac12\binom{n}{2}$,
then it has a subgraph $H$ of order at least $k$ such that $\delta(H) \ge \alpha|V(H)|$.
\end{lemma}

For large enough $k$, our choice of $n$ satisfies the hypothesis of the lemma with $\alpha = \frac12(1-\eps)$. So we are guaranteed a subgraph $H$ with $|V(H)| \ge k$ and $\delta(H) \ge \frac12(1-\eps)|V(H)|$.
By Lemma~\ref{lem:thinning} with $c = \frac12(1-\eps)$ and $\eps$ halved, there exists $S \subseteq V(H) \subseteq V(G)$ of order $k$ with $\delta(G[S]) \ge (\frac12-\eps)(k-1)$.
\end{proof}

We also apply our thinning lemma to prove 
Theorem~\ref{KPPR-fixed}.

\begin{proof}[Proof of Theorem~\ref{KPPR-fixed}]
Let $G$ be a graph of order $C k \ln k$, where $C$ is the same constant as in Theorem~\ref{EP-variable}\ref{EP-variable,a}. 
Then $G$ or $\overline{G}$ contains a subgraph $H$ of order $\ell$, where $k\le \ell\le Ck\ln k$, with $\delta(H) \ge \frac12\ell$. 
For the application of Lemma~\ref{lem:thinning}, set $c = \frac12$ and $\eps = \sqrt{2\ln(k+1)/(k-1)}$.
Then
$\exp\left(\frac12\eps^2(k-1)\right) =k+1
> k$,
and so Lemma~\ref{lem:thinning}
yields a set $S\subseteq V(H)\subseteq V(G)$ of order $k$ such that
\begin{align*}
\delta(G[S]) = \delta(H[S]) 
 \ge \left(\frac12 - \eps\right)(k-1)
 \ge \frac12(k-1)-2\sqrt{(k-1)\ln k},
\end{align*}
which proves the theorem.
\end{proof}


\section{A weighted random graph construction}
\label{sec:weighted}

In this section, by a careful analysis of the weighted construction that  Erd\H{o}s and Pach used for Theorem~\ref{EP-variable}\ref{EP-variable,b}, we extend the validity of that result, thereby establishing Theorem~\ref{thm:weighted}.

\begin{proof}[Proof of Theorem~\ref{thm:weighted}]
By the monotonicity of $R_{t(\ell)}$, there is no loss of generality in assuming $\nu <\frac27$. Let $\nu' = \frac12\nu+\frac17$.
Let $k$ be some sufficiently large integer.
Let $g(\cdot)$ be the function defined by
\begin{align*}
g(x) = \left\lfloor\frac{\nu'}{8}\frac{\ln x}{\ln\ln x}\right\rfloor
\end{align*}
and write $z = g(k)$.
Construct a graph $G = (V,E)$ randomly as follows.  The vertex set is defined
$V = V_1 \cup \dots \cup V_z$,
for disjoint sets $V_1,\dots,V_z$ with
\begin{align*}
|V_1| = \dots = |V_z| = \left\lfloor\left(1-\frac{1}{2z}\right)k\right\rfloor.
\end{align*}
Note that $|V| < k\ln k$ and
\begin{align*}
|V| \ge z(k-1) \ge \frac{\tfrac12\nu+\tfrac17}{10}\cdot\frac{k\ln k}{\ln\ln k}.
\end{align*}
Thus we can safely choose $C_\nu=\nu/20$ for the statement of the theorem.
The random edge set $E$ of $G$ is determined according to a skewed distribution. Given vertices $v_i\in V_i$ and $v_j\in V_j$, the probability of their being joined by an edge is defined by
\begin{align*}
\Pr(v_iv_j\in E) = p_{ij} = 
\begin{cases}
\frac12 - (2z)^{-4(i+j)-1} & \text{ if $i\ne j$;}\\
\frac12 + (2z)^{-8i} & \text{ if $i=j$.}
\end{cases}
\end{align*}

The remainder of the proof is devoted to proving that $G$ has the properties we desire with positive probability. Let $X$ be an arbitrary subset of $\ell \ge k$ vertices and for convenience write $\ell_i = |X\cap V_i|$ for every $i\in\{1,\dots,z\}$. We will show that $X$ is $t$-homogeneous with very small probability, where
$t = (\tfrac12-\hat{\eps})(\ell-1)$
for some $\hat{\eps}=\hat{\eps}(\ell)>0$ to be specified later.

First we concentrate on the minimum degree of the graph $G[X]$ induced by $X$.
To this end, let $j'$ be the largest integer that satisfies $\ell_{j'} \ge \ell/(4z^2)$, so that $\ell_i < \ell/(4z^2)$ for all $i > j'$. By this choice of $j'$, note that
\begin{align}
\sum_{i<j'}\ell_i 
& \ge \ell - |V_{j'}| -\frac{z\ell}{4z^2}
\ge \ell - \left(1-\frac{1}{2z}\right)k -\frac{\ell}{4z}\nonumber\\
& = \left(1-\frac{1}{4z}\right)(\ell - k) +\frac{k}{4z}
\ge \frac{\ell}{4z},\nonumber
\end{align}
for large enough $k$. We consider the minimum degree only among vertices in $X\cap V_{j'}$. Let $v\in X\cap V_{j'}$. Since the degree of $v$ in $G[X]$ is the sum $\sum_i e(v, X\cap V_i)$ (where $e(v,S)$ denotes the number of edges between $v$ and $S$), its expectation satisfies
\begin{align}
\Exp&(\deg_{G[X]}(v))
 = (\ell_{j'}-1)p_{j'j'}+\sum_{i\ne j'}\ell_ip_{ij'} \nonumber\\
& = (\ell_{j'}-1)\left(\frac12+\frac{1}{(2z)^{8j'}}\right) + \sum_{i\ne j'}\ell_i\left(\frac12-\frac{1}{(2z)^{4(i+j')+1}}\right) \nonumber\\
& \le \frac12(\ell-1)+\frac{\ell_{j'}-1}{(2z)^{8j'}}-\sum_{i< j'}\frac{\ell_i}{(2z)^{4(i+j')+1}} \nonumber\\
& \le \frac12(\ell-1)+\frac{\ell_{j'}-1}{(2z)^{8j'}}-\frac{\ell(2z)^{2}}{2(2z)^{8j'}}
 \le \left(\frac12-\frac{1}{(2z)^{8z}}\right)(\ell-1), \nonumber
\end{align}
for large enough $k$.
We also easily have that $\Exp(\deg_{G[X]}(v))\ge\frac13(\ell-1)$.
Since $\deg_{G[X]}(v)$ is a sum of independent Bernoulli random variables, it follows by Hoeffding's inequality (cf.~\cite[Eq.~(2.14)]{JLR00}) that, for any $\eps>0$, provided $k$ is large enough,
\begin{align}
\Pr&(\deg_{G[X]}(v) > (1+\eps)\Exp(\deg_{G[X]}(v))) \nonumber\\
&< \exp(-\tfrac13\eps^2\Exp(\deg_{G[X]}(v)))
\le \exp(-\tfrac19\eps^2(\ell-1)).
\label{eqn:degreelower}
\end{align}
Although this bound is already quite small, for our purposes we require an even stronger bound on $\Pr(\delta(G[X]) > (1+\eps)\Exp(\deg_{G[X]}(v)))$.
For this, we restrict our attention further by bounding the minimum degree among vertices of some arbitrary subset $Y\subseteq X\cap V_{j'}$ of order $\frac12 \eps \ell_{j'}$.
Now if $v\in Y$ has degree in $G[X]$ greater than $(1+\eps)\Exp(\deg_{G[X]}(v))$, then the number of neighbours of $v$ outside $Y$ must be greater than $(1+\frac12\eps)\Exp(\deg_{G[X]}(v)))$. Note that the random variables that count the number of neighbours of $v$ in $G[X]$ outside $Y$ for all $v\in Y$ are mutually independent. Also, since $Y$ is small, the following analogue of~\eqref{eqn:degreelower} holds for each $v\in Y$, as long as $k$ is large enough:
\begin{align*}
\Pr(e(v,X\setminus Y) > (1+\tfrac12\eps)\Exp(\deg_{G[X]}(v)))
< \exp(-\tfrac1{36}\eps^2(\ell-1)).
\end{align*}
Combining these observations, it follows, for any $\eps>0$, that if $k$ is sufficiently large then 
\begin{align}
\Pr&\left(\delta(G[X]) > (1+\eps)\left(\frac12-\frac{1}{(2z)^{8z}}\right)(\ell-1)\right) \nonumber\\
& \le \Pr(\delta(G[X]) > (1+\eps)\Exp(\deg_{G[X]}(v))) \nonumber\\
& \le \Pr(\forall v\in Y: \deg_{G[X]}(v) > (1+\eps)\Exp(\deg_{G[X]}(v))) \nonumber\\
& \le \prod_{v\in Y} \Pr(e(v,X\setminus Y) > (1+\tfrac12\eps)\Exp(\deg_{G[X]}(v))) \nonumber\\
& \le \exp(-\tfrac1{72}\eps^3(\ell-1)\ell_{j'}) \le \exp\left(-\frac{\eps^3(\ell-1)\ell}{288z^2}\right).\label{eqn:j'}
\end{align}

Next we concentrate on the minimum degree of the complement of $G[X]$. To this end, let $j^*\in\{1,\dots,z\}$ be such that
$\ell_{j^*}(2z)^{-4j^*}$
is maximised.
By an averaging argument, this choice of $j^*$ implies
\begin{align*}
\frac{\ell_{j^*}}{(2z)^{4j^*}} \ge
 \frac{\ell}{z(2z)^{4z}} \ge 
 \frac{\ell -1 }{z(2z)^{4z}}.
\end{align*}
 We shall consider the maximum degree only among vertices in $X\cap V_{j^*}$. Let $v\in X\cap V_{j^*}$. Then we have that the expected degree of $v$ in $G[X]$ satisfies for all large enough $k$ that
\begin{align*}
\Exp&(\deg_{G[X]}(v))
 = (\ell_{j^*}-1)p_{j^*j^*}+\sum_{i\ne j^*}\ell_ip_{ij^*} \\
& = (\ell_{j^*}-1)\left(\frac12+\frac{1}{(2z)^{8j^*}}\right) + \sum_{i\ne j^*}\ell_i\left(\frac12-\frac{1}{(2z)^{4(i+j^*)+1}}\right)\\
& \ge \frac12(\ell-1) + \frac{\ell_{j^*}-1}{(2z)^{8j^*}}
- \frac{\ell_{j^*}(z-1)}{(2z)^{8j^*+1}}
 \ge \frac12(\ell-1) + \frac{\ell_{j^*}}{2(2z)^{8j^*}} \\
& \ge \left(\frac12 + \frac{1}{(2z)^{8z+1}}\right)(\ell-1).
\end{align*}
We also easily see that $\Exp(\deg_{G[X]}(v)) \le \frac23(\ell-1)$. By similar arguments as above, but for the complement $\overline{G}$ of $G$, we obtain, for any $\eps > 0$, that if $k$ is large enough then
\begin{align}
\Pr\left(\delta(\overline{G}[X]) > (1+\eps)\left(\tfrac12 - (2z)^{-8z-1}\right)(\ell-1)\right)
 \le \exp\left(-\frac{\eps^3(\ell-1)\ell}{72z(2z)^{4z}}\right).\label{eqn:j*}
\end{align}

To tie everything together, we apply~\eqref{eqn:j'} and~\eqref{eqn:j*} with a common choice of $\eps$. In particular, let $\hat{\eps}(\cdot)$ be the function defined by
\begin{align*}
\hat{\eps}(x) = (2g(x))^{-8g(x)-2}
\end{align*}
and let $\eps = \hat{\eps}(k\ln k)$.
Note that since $k\le\ell \le |V| < k\ln k$ we have that $\eps < \hat{\eps}(\ell)\le\hat{\eps}(k)$.
By our definition of $g(\cdot)$, we obtain that as $k\to \infty$ both
\begin{align*}
\hat{\eps}(k) \sim k^{-(1+o(1))\nu'}
\text{ and }
\eps \sim k^{-(1+o(1))\nu'},
\end{align*}
so that $\hat{\eps}(\ell) > \ell^{-\nu}$ for large enough $k$, by the choice of $\nu'$.
Also, for large $k$,
\begin{align*}
(1+\eps)\left(\tfrac12-(2z)^{-8z}\right) 
\le
(1+\eps)\left(\tfrac12-(2z)^{-8z-1}\right)\\
\le
\tfrac12-\hat{\eps}(k)
\le
\tfrac12-\hat{\eps}(\ell)
\le
\tfrac12-\eps.
\end{align*}
Then, by~\eqref{eqn:j'} and~\eqref{eqn:j*}, the probability that the set $X$ is $((\frac12-\hat{\eps}(\ell))(\ell-1))$-homogeneous is, for all $k$  sufficiently large, at most
\begin{align*}
2&\exp\left(-\frac{\eps^3(\ell-1)\ell}{288z(2z)^{4z}}\right)
\le 2\exp\left(-\frac{\ell(\ell-1)}{144(2g(k\ln k))^{28g(k\ln k)+7}}\right) \\
& = 2\exp(-\ell^{2-(1+o(1))7\nu'/2})
< 2\exp(-k^{2-(1+o(1))7\nu'/2}).
\end{align*}

The above estimate holds for any $X$ with $\ell\ge k$ vertices. Thus the probability that $G$ has a $((\frac12-\hat{\eps}(\ell))(\ell-1))$-homogeneous set with $\ell\ge k$ vertices is less than
\begin{align*}
2^{zk}\cdot 2\exp(-k^{2-(1+o(1))7\nu'/2}),
\end{align*}
which is less than $1$ for $k$ large enough, since $z=k^{o(1)}$ and $\nu'<\frac27$. For large enough $k$ we have $\hat{\eps}(\ell) > \ell^{-\nu}$, and so conclude there is a graph of order at least $C_\nu k\ln k/\ln\ln k$
in which no vertex subset of order $\ell\ge k$ is $((\frac12-\ell^{-\nu})(\ell-1))$-homogeneous, as required.
\end{proof}


\section{Concluding remarks and open problems}
\label{sec:conclusion}

Theorem~\ref{KPPR-variable} demonstrates that the threshold between polynomial and super-polynomial growth of the variable quasi-Ramsey numbers $R_{f}(k)$ occurs for $f(\ell)=\frac12\ell+\Theta(\sqrt{\ell\ln \ell})$.  Erd\H{o}s and Pach did not notice this phenomenon and indeed presumed a different outcome.
It is rare to see sharp asymptotic results in this area of mathematics, so this highlights the power of both graph discrepancy and the probabilistic method.

We may also ask for finer detail on the abrupt change in the variable quasi-Ramsey problem for minimum density around $\frac12$.
\begin{itemize}
\item
For $\eps>0$, what precisely is the least choice of $f(\ell)$ for which $R_{f}(k) = \Omega(k(\ln k)^{1+\eps})$? We only know it satisfies $\frac12\ell \leq f(\ell) \leq \frac12\ell+o(\sqrt{\ell\ln \ell})$.
\item
Does a form of Theorem~\ref{KPPR-variable}\ref{KPPR-variable,a} hold for $\nu = \nu(\ell) \to \infty$ as $\ell \to\infty$?
\end{itemize}

Our understanding of fixed quasi-Ramsey numbers $R^*_t(k)$ is less clear, even if thinning has brought us to a slightly better viewpoint.
We believe that it would be difficult to determine the second-order term in the polynomial-to-super-polynomial threshold for $R^*_t(k)$.  The threshold might be at $t = \frac12k+\Theta(\sqrt{k\ln k})$, this being the boundary case for super-polynomial behaviour in Proposition~\ref{prop:lower} or~\ref{prop:lower,spencer}.  We cannot rule out that the threshold is close to $t = \frac12k+\Theta(\ln k)$, this being the boundary case for polynomial behaviour in Theorem~\ref{EP-fixed}.
It is unlikely that one can use the thinning method to obtain sharp bounds for the fixed quasi-Ramsey number $R^*_t(k)$ for $t \geq \frac12(k-1)$. It seems that for this one would need bounds on the variable quasi-Ramsey numbers that contradict Theorem~\ref{KPPR-variable}\ref{KPPR-variable,b}.

We concentrated on the case of minimum density around $\frac12$, but it would also be interesting to better understand the parameters further away from the threshold. 
Intuitively, tightening the existing bounds in the exponential regime could be as difficult as the analogous problem for $R(k)$,  
but in the linear regime there is room for improvement, especially near the threshold.

Let us examine the bounds for $R^*_t(k)$. 
Fix $\alpha \in [0,1]$ and suppose $t = t(k)$ satisfies $t \sim \alpha(k-1)$ as $k \to \infty$.
If $\alpha > \frac12$, then Proposition~\ref{prop:lower} or~\ref{prop:lower,spencer} and the Erd\H{o}s--Szekeres-type bound of Chappell and Gimbel together give
\begin{align*}
\frac12\Lambda^*(1-\alpha)+o(1)
\le
\frac1k\ln R^*_t(k)
\le
2\alpha\ln 2+o(1).
\end{align*}
Recall that $\Lambda^*(1-\alpha) \downarrow 0$ as $\alpha \downarrow \frac12$ and $\Lambda^*(0)=\ln 2$.
It is curious that these bounds do not imply that $\frac1k\ln R^*_t(k)$ is strictly smaller than $\frac1k\ln R(k)$ for any $\alpha> \frac12$, but there might be a way to prove such a statement without improving the exponential bounds directly.
If $\frac14 \le \alpha < \frac12$, then the lower bound certified in Figure~\ref{fig:lowerex} and Theorem~\ref{thm:upper} (plugging in $\eps = 1/2-\alpha$) together give
\begin{align*}
2\alpha + 1 + o(1)
\le
\frac1kR^*_t(k)
\le
\sqrt{\left(\tfrac12-\alpha\right)^{-1}+1} + o(1).
\end{align*}
The thinning upper bound can be improved slightly, but close to $\alpha=\frac12$ a new idea may be needed for upper and lower bounds that agree up to a constant multiple, independent of $\frac12-\alpha$.
For $\alpha < \frac14$, there is the exact formula of Chappell and Gimbel.

To conclude, we reiterate a problem left open by Erd\H{o}s and Pach, which asks about arguably the most interesting case for $R^*_t(k)$, the symmetric choice $t = \frac12(k-1)$, rounded up or down. They showed that 
\begin{align*}
R^*_{\frac12(k-1)}(k) = \Omega\left(\frac{k\ln k}{\ln\ln k}\right)\quad\text{and}\quad R^*_{\frac12(k-1)}(k)=O(k^2),
\end{align*}
but what is the correct behaviour of $R^*_{\frac12(k-1)}(k)$? 
Note added: subsequent to the present work, three of the authors have improved the upper bound to $O(k\ln^2 k)$~\cite{KPR14+}.


\bibliographystyle{abbrv}
\bibliography{imramsey}

\begin{thebibliography}{10}

\bibitem{Bol01}
B.~Bollob{\'a}s.
\newblock {\em Random graphs}, volume~73 of {\em Cambridge Studies in Advanced
  Mathematics}.
\newblock Cambridge University Press, Cambridge, second edition, 2001.

\bibitem{ChGi11}
G.~G. Chappell and J.~Gimbel.
\newblock On defective {R}amsey numbers.
\newblock {To appear in {\em Mathematica Bohemica}}, 2011.

\bibitem{Con09}
D.~Conlon.
\newblock A new upper bound for diagonal {R}amsey numbers.
\newblock {\em Ann.~of Math.~(2)}, 170(2):941--960, 2009.

\bibitem{DeZe98}
A.~Dembo and O.~Zeitouni.
\newblock {\em Large {D}eviations {T}echniques and {A}pplications}, volume~38
  of {\em Applications of Mathematics (New York)}.
\newblock Springer-Verlag, New York, 2nd edition, 1998.

\bibitem{Erd47}
P.~Erd{\H{o}}s.
\newblock Some remarks on the theory of graphs.
\newblock {\em Bull.~Amer.~Math. Soc.}, 53:292--294, 1947.

\bibitem{ErPa83}
P.~Erd{\H{o}}s and J.~Pach.
\newblock On a quasi-{R}amsey problem.
\newblock {\em J. Graph Theory}, 7(1):137--147, 1983.

\bibitem{ErSp74}
P.~Erd{\H{o}}s and J.~Spencer.
\newblock {\em Probabilistic Methods in Combinatorics}.
\newblock Probability and Mathematical Statistics, A Series of Monographs and
  Textbooks. Academic Press, New York and London, 1974.

\bibitem{ErSz35}
P.~Erd{\H{o}}s and G.~Szekeres.
\newblock A combinatorial problem in geometry.
\newblock {\em Compositio Math.}, 2:463--470, 1935.

\bibitem{JLR00}
S.~Janson, T.~{\L}uczak, and A.~Rucinski.
\newblock {\em Random {G}raphs}.
\newblock Wiley-Interscience Series in Discrete Mathematics and Optimization.
  Wiley-Interscience, New York, 2000.

\bibitem{KaMc10}
R.~J. Kang and C.~McDiarmid.
\newblock The {$t$}-improper chromatic number of random graphs.
\newblock {\em Combin.~Probab.~Comput.}, 19(1):87--98, 2010.

\bibitem{KPR14+}
R.~J. {Kang}, V.~{Patel}, and G.~{Regts}.
\newblock On a {R}amsey-type problem of {E}rd{\H{o}}s and {P}ach.
\newblock {\em ArXiv e-prints}, Nov. 2014.

\bibitem{Spe75}
J.~Spencer.
\newblock Ramsey's theorem---a new lower bound.
\newblock {\em J. Combin.~Theory Ser.~A}, 18:108--115, 1975.

\end{thebibliography}

\end{document}